\newtheorem{theorem}{Theorem}[section]
\newtheorem{lemma}[theorem]{Lemma}
\newtheorem{proposition}[theorem]{Proposition}
\newtheorem{corollary}[theorem]{Corollary}
\theoremstyle{definition}
\newtheorem{definition}[theorem]{Definition}
\newtheorem{remark}[theorem]{Remark}
\newtheorem{general remarks}[theorem]{General remarks}
\newcommand{\id}{\operatorname{id}}
\newcommand{\ord}{\operatorname{ord}}
\newcommand{\ben}{\begin{enumerate}}
\newcommand{\een}{\end{enumerate}}
\begin{document}

\title[Hilbert series of PI relative free $G$-graded algebras are rational functions]
{Hilbert series of PI relative free $G$-graded algebras are
rational functions}

\author{Eli Aljadeff}
\address{Department of Mathematics, Technion-Israel Institute of
Technology, Haifa 32000, Israel}
\email{aljadeff@tx.technion.ac.il}
\author{Alexei Kanel-Belov}
\address{Department of Mathematics, Bar-Ilan University, Ramat-Gan, Israel}
\email{beloval@macs.biu.ac.il}

\date{Oct. 30, 2010}

\subjclass[2000]{16R10, 16W50, 16P90}

%\begin{abstract}

%\end{abstract}

\keywords{graded algebra, polynomial identity, Hilbert series}

\thanks {The first author was partially supported by the ISRAEL SCIENCE FOUNDATION
(grant No. 1283/08) and by the E.SCHAVER RESEARCH FUND. The second
author was partially supported by the ISRAEL SCIENCE FOUNDATION
(grant No. 1178/06). The second author is grateful to the Russian Fund of Fundamental
Research for supporting his visit to India in 2008 (grant  $\ RFBR 08-01-91300-IND_a$).}

\begin{abstract}

Let $G$ be a finite group, $(g_{1},\ldots,g_{r})$ an
(unordered) $r$-tuple of $G^{(r)}$ and $x_{i,g_i}$'s variables
that correspond to the $g_i$'s, $i=1,\ldots,r$. Let $F\langle
x_{1,g_1},\ldots,x_{r,g_r} \rangle$ be the corresponding free
$G$-graded algebra where $F$ is a field of zero characteristic. Here
the degree of a monomial is determined by
the product of the indices in $G$. Let $I$ be a $G$-graded
$T$-ideal of $F\langle x_{1,g_1},\ldots,x_{r,g_r} \rangle$ which is PI
(e.g. any ideal of identities of a $G$-graded finite dimensional algebra is of this type).
We prove that the Hilbert series of $F\langle
x_{1,g_1},\ldots,x_{r,g_r} \rangle/I$ is a rational function. More
generally, we show that the Hilbert series which corresponds to
any $g$-homogeneous component of $F\langle x_{1,g_1},\ldots,x_{r,g_r}
\rangle/I$ is a rational function.

\end{abstract}

\maketitle

\bigskip
\noindent

\bigskip\bigskip

\hspace{3cm}

\begin{section}{Introduction} \label{Introduction}

The Hilbert series of an affine (i.e. finitely generated) algebra
and its computation, is a topic which attracted a lot of attention
in the last century , classically in commutative algebra (see e.g.
\cite{Stanley1}, \cite{Stanley2}), but also (and in fact more
importantly for the purpose of this paper) in non commutative
algebra (see e.g. \cite{Anick}). In particular the question of
when the Hilbert series $H_{W}$ of and algebra $W$ is the Taylor
expansion of a rational function is fundamental in the theory, and
when it does, it serves as a ``fine" structural invariant of $W$
with applications to growth invariants (see  \cite{Dren0},
\cite{Dren1}, \cite{Dren2}, \cite{Kob}, \cite{Koshlu},
\cite{Shearer}).

In case the algebra $W$ is a
relatively free algebra, i.e. isomorphic to the quotient of an
affine free algebra $F \langle x_1, \ldots, x_n \rangle$ by a
$T$-ideal of identities $I$ it is known that $H_{F \langle x_1,
\ldots, x_n \rangle/I}$ is a rational function (see \cite{Belov}, \cite{BR}) and this
fact has been successfully used in the estimation of the
asymptotic behavior of the co-character sequence of a PI algebra.

Specifically in \cite{BereleRegev} (based on explicit formulas for
the co-character sequences which appear in \cite{BER}) the authors
show that if $A$ is PI-algebra with $1$ which satisfies a Capelli
identity, then the asymptotic behavior of the codimension sequence
is of the form

$$
c_{n}(A)=an^{g}l^{n}
$$
where $a$ is a scalar, $2g$ is an integer and $l$ is a non
negative integer (we refer the reader to \cite{gz1}, \cite{gz2}
and \cite{gz3} for a comprehensive account on the codimension
sequence of a PI algebra).

The rationality of the Hilbert series of an affine relatively free
algebra has been established also in case $W$ is a super algebra
(i.e. $Z_{2}$-graded) and our goal in this paper is to extend
these results to $G$-graded relatively free affine PI algebras
where $G$ is an arbitrary finite group.

Let $\Omega=(g_1,...,g_r)$ be an unordered $r$-tuple of elements
of $G$ (in particular we allow repetitions). Let
$X_{G}=\{x_{(1,g_1)},...,x_{(r,g_r)}\}$ be a set of variables
which correspond to the set $\Omega$ and let $F\langle
X_{G}\rangle$ be the free algebra generated by $X_{G}$ over $F$.
In order to keep the notation as light as possible we may omit one
index and write $X_{G}=\{x_{g_1},...,x_{g_r}\}$ with the
convention that the variables $x_{g_i}$ and $x_{g_j}$ are
different even if $g_i=g_j$ in $G$. We equip $F\langle
X_{G}\rangle$ with a (natural) $G$-grading, namely a monomial of
the form $x_{g_{s_1}}x_{g_{s_2}}\cdots x_{g_{s_m}}$ is homogeneous
of degree $g_{s_1}~ g_{s_2}\cdots g_{s_m}\in G$. We also consider
its homogeneous degree $m\in \mathbb{N}$, namely the number of
variables in the monomial. Let $I$ be a $G$-graded $T$-ideal, that
is, $I$ is closed under $G$-graded endomorphisms of $F\langle
X_{G}\rangle$. This implies (by a standard argument) that $I$ is
generated by strongly $G$-graded polynomials, i.e. $G$-graded
polynomials whose different monomials are permutations of each
other and moreover they all have the same $G$-degree (see
\cite{AHN}).

We will assume in addition that the $T$-ideal $I$ contains an
ordinary polynomial. By this we mean that $I$ contains all
$G$-graded polynomials obtained by assigning all possible degrees
in $G$ to the variables $x_i$ of an ordinary non zero polynomial
$p(x_1,\ldots,x_n)$. We refer to a $G$-graded $T$-ideal which
contains an ordinary polynomial as a PI $G$-graded $T$-ideal.

\begin{remark}

Note for instance, that the $T$-ideal of $G$-graded identities of any
finite dimensional $G$-graded algebra is PI. On the other hand if $G \neq  \{e\}$, then the $G$-graded algebra $W$ where $W_{e}$
is a free noncommutative algebra and $W_{g}=0$ for $g \neq e$ is $G$-graded PI but of course not PI.

\end{remark}

Let $F\langle x_{g_1},\ldots,x_{g_r} \rangle$ be the free
$G$-graded algebra generated by homogeneous variables
$\{x_{g_i}\}_{i=1}^{r}$. As above we let $I$ be a $G$-graded $T$-ideal which
is PI and let $\Phi=F\langle x_{g_1},\ldots,x_{g_r} \rangle/I$ be the
corresponding relatively free $G$-graded algebra. Let $\Omega_{n}$ be the (finite) set of monomials of degree $n$ on the
$x_{g}$'s and let $c_{n}$ be the dimension of the
$F$-subspace of $\Phi$
spanned by the monomials of $\Omega_{n}$. We denote by

$$
H_{F\langle x_{g_1},\ldots,x_{g_r} \rangle/I}(t)=
\sum_{n}c_{n}t^{n}
$$
the Hilbert series of $\Phi$ with respect to the $x_{g}$'s.

\begin{theorem} \label{main basic theorem}

The series $H_{F\langle x_{g_1},\ldots,x_{g_r} \rangle/I}(t)$ is the Taylor series
of a rational function.

\end{theorem}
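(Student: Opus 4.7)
The plan is to reduce the theorem to the classical rationality statement for Hilbert series of affine \emph{ungraded} PI algebras (Belov; see \cite{Belov}, \cite{BR}). The first step is a bookkeeping observation: the series $H_{\Phi}(t)=\sum_n c_n t^n$ is exactly the ordinary Hilbert series of $\Phi=F\langle X_G\rangle /I$ regarded as an affine ungraded algebra on the $r$ generators $\overline{x}_{g_1},\ldots,\overline{x}_{g_r}$, since by definition $c_n$ is the dimension of the $F$-span in $\Phi$ of all monomials of total length~$n$.

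Next I would verify that $\Phi$ is ordinary PI. Using the characteristic-zero assumption to multilinearize, the PI hypothesis on $I$ may be restated as: there is a nonzero \emph{multilinear} ordinary polynomial $p(x_1,\ldots,x_n)$ such that $p(x_{1,h_1},\ldots,x_{n,h_n})\in I$ for every $(h_1,\ldots,h_n)\in G^n$. Given arbitrary $a_1,\ldots,a_n\in \Phi$, decompose each $a_i=\sum_{h\in G}a_{i,h}$ into $G$-homogeneous components; multilinearity of $p$ gives
$$
p(a_1,\ldots,a_n) = \sum_{(h_1,\ldots,h_n)\in G^n} p(a_{1,h_1},\ldots,a_{n,h_n}),
$$
and every summand vanishes in $\Phi$ because the substitution $x_{i,h_i}\mapsto a_{i,h_i}$ is a well-defined $G$-graded homomorphism into $\Phi$ and the inputs $p(x_{1,h_1},\ldots,x_{n,h_n})$ lie in $I$. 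Thus $\Phi$ satisfies the ordinary multilinear identity $p$, so it is affine ordinary PI.

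It then remains to quote the classical rationality theorem: the Hilbert series of any affine ordinary PI algebra is a rational function. \textbf{The main potential obstacle} is that the cited results \cite{Belov}, \cite{BR} are usually phrased for ordinary \emph{relatively free} algebras, whereas $\Phi$ is only the quotient of a free algebra by a $G$-graded $T$-ideal and in general is not ordinary relatively free. To bridge this gap I would either invoke the fully general form of Belov's theorem for arbitrary affine PI algebras (whose proof via Shirshov's height theorem applies unchanged to $\Phi$), or establish a $G$-graded height theorem for $\Phi$ directly: produce a finite set of $G$-homogeneous ``ruling'' words $w_1,\ldots,w_s$ and an integer $h$ so that every element of $\Phi$ is a linear combination of products $u_0 w_{i_1}^{k_1} u_1 \cdots w_{i_m}^{k_m} u_m$ of essential height at most $h$, with the $u_j$ of bounded length. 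Such a canonical form turns $H_{\Phi}(t)$ into a finite sum of products of geometric series, giving rationality directly, and moreover paves the way for the finer statement about $g$-homogeneous components announced in the introduction, since each canonical product carries a well-defined $G$-degree determined by the $w_{i_j}$'s, the $u_j$'s and the exponents $k_j$.
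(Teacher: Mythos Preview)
Your reduction to the ordinary PI case is correct as far as it goes: $\Phi$ is affine, ordinary PI, and $H_{\Phi}(t)$ is indeed its usual Hilbert series. The difficulty you flag, however, is not a minor technicality to be patched; it is the entire content of the theorem. Rationality of the Hilbert series is \emph{false} for general affine PI algebras. The paper itself recalls the standard counterexample: the monomial algebra in $x_1,\dots,x_5$ with relations $x_1x_2^{m}x_3x_4^{n}x_5=0$ whenever $m^{2}-2n^{2}=1$ is representable (hence affine PI) but has a transcendental Hilbert series. So your option~(a), ``invoke the fully general form of Belov's theorem for arbitrary affine PI algebras,'' cannot succeed, and your option~(b) does not rescue the argument either: Shirshov's height theorem already furnishes a spanning set of the shape $u_0w_{i_1}^{k_1}\cdots w_{i_m}^{k_m}u_m$ for \emph{every} affine PI algebra, including the Pell-equation example, yet rationality fails there. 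A spanning set bounds dimensions from above; it says nothing about the pattern of linear relations among the spanning elements, and that pattern can be as wild as the solutions of a Diophantine equation.

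What actually makes the theorem true is that $I$ is a $G$-graded $T$-ideal, and the paper's proof exploits this through the full $G$-graded Kemer machinery. One assumes a counterexample, passes (via the $G$-graded Specht property) to a $T$-ideal $I$ maximal among non-rational ones, and reduces to the case where $\Phi$ is PI-equivalent to a single basic algebra $A$. A Kemer polynomial of $A$ generates, together with $I$, a strictly larger $T$-ideal $J$; the crucial point (Prop.~8.2 of \cite{AB}) is that $J/I$ is closed under multiplication by characteristic values of Shirshov words, so it is a submodule of a finite module over an affine commutative ring and hence has rational Hilbert series. Combined with the rationality of $F\langle X_G\rangle/J$ (by maximality of $I$), this yields a contradiction. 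None of these ingredients---Specht, basic algebras, Kemer polynomials, closure under characteristic values---are visible from the bare fact that $\Phi$ is affine PI; they all depend on $I$ being a $T$-ideal in the graded sense.
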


In our proof we strongly use key ingredients which appear in the
proof of representability of the $G$-graded relatively free
algebras (see \cite{AB}). These ingredients include the existence
of certain polynomials, called Kemer polynomials, which are
``extremal non identities'' (see \cite{AB}) and their existence
relies on the fundamental fact that the Jacobson radical of a PI
algebra is nilpotent (see \cite{Amitsur}, \cite{Braun_nilpotent},
\cite{Kemer_Hilbert} and \cite{Rasmy-Nilpotency}) (this parallels
the Hilbert's Nullstellensatz in the commutative theory), and the
solution of the Specht problem (see \cite{AB}, \cite{Kemer_rep_0},
\cite{Kemer_rep_1} and \cite{Sv}) (which parallels the Hilbert
basis theorem in the commutative theory). We emphasize however
that the rationality of the Hilbert series is not a corollary of
representability since there are examples of representable
algebras which have a transcendental Hilbert series. Indeed,
recall from \cite{BR} that the monomial algebra supported by
monomials of the form $ h=x_{1}x_{2}^{m}x_{3}x_{4}^{n}x_{5} $ with
the extra relation that $h=0$ if $m^{2}-2n^{2}=1$ is representable
but with a transcendental Hilbert series. For more on this we
refer the reader to \cite{Belov443} and \cite{BelovRowenShirshov}.

In Theorems \ref{main multivariate} and \ref{main single comp}
below we present different generalizations of Theorem \ref{main
basic theorem}. Then, in Theorem \ref{main full}, these are
combined into one general statement. We start with multivariate
Hilbert series.

Let $F\langle x_{g_1},\ldots,x_{g_r} \rangle$ be the free
$G$-graded algebra generated by homogeneous variables
$\{x_{g_i}\}_{i=1}^{r}$. As above we let $I$ be a $G$-graded $T$-ideal which
is PI and let $F\langle x_{g_1},\ldots,x_{g_r} \rangle/I$ be the
corresponding relatively free $G$-graded algebra. For any
$r$-tuple of non-negative integers $(d_1,\ldots,d_r)$ we consider
the (finite) set of monomials $\Omega_{(d_1,\ldots,d_r)}$ on
$x_g$'s where the variable $x_{g_i}$ appears exactly $d_{i}$
times, $i=1,\ldots,r$. We denote by $c_{(d_1,\ldots,d_r)}$ the dimension of the
$F$-subspace of $F\langle x_{g_1},\ldots,x_{g_r} \rangle/I$
spanned by the monomials in $\Omega_{(d_1,\ldots,d_r)}$.
\begin{remark}

As mentioned above the $T$ ideal $I$ is generated by multilinear
$G$-graded polynomials which are strongly homogeneous i.e. its monomials have the same
homogeneous degree in $G$. This fact will
play an important role in the sequel.
\end{remark}

\begin{definition}

Notation as above. The multivariate Hilbert series of $F\langle x_{g_1},\ldots,x_{g_r} \rangle/I$ is given by

$H_{F\langle x_{g_1},\ldots,x_{g_r} \rangle/I}(t_1,\ldots,t_r)=
\sum_{(d_1,\ldots,d_r)}c_{(d_1,\ldots,d_r)}t_{1}^{d_1}\cdots
t_{r}^{d_r}.$

\end{definition}

The following result generalizes Theorem \ref{main basic theorem}.

\begin{theorem} \label{main multivariate}

Notation as above. The Hilbert series $H_{F\langle
x_{g_1},\ldots,x_{g_r} \rangle/I}(t_1,\ldots,t_r)$ is a rational
function.

\end{theorem}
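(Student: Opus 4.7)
The plan is to adapt the proof of Theorem \ref{main basic theorem}, keeping track of each variable $x_{g_i}$ individually instead of only the total number of letters. First, I would observe that the ideal $I$ is $\mathbb{Z}^{r}$-multihomogeneous with respect to the grading assigning weight $e_{i}\in\mathbb{Z}^{r}$ to $x_{g_i}$: the rescalings $x_{g_i}\mapsto\lambda_i x_{g_i}$, $\lambda_i\in F^{\times}$, are $G$-graded endomorphisms of $F\langle X_G\rangle$, and closure of $I$ under all such endomorphisms (together with a Vandermonde argument) forces $I$ to split into its multihomogeneous components. Consequently $\Phi=F\langle X_G\rangle/I$ inherits a $\mathbb{Z}^{r}$-grading, so that $c_{(d_1,\ldots,d_r)}$ is genuinely the dimension of the $(d_1,\ldots,d_r)$-multihomogeneous component.

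Next, I would revisit the combinatorial heart of the proof of Theorem \ref{main basic theorem}: it produces a linear basis $\mathcal{B}$ of $\Phi$ consisting of ``normal-form'' monomials in the letters $x_{g_1},\ldots,x_{g_r}$, characterized by avoidance of finitely many reducible patterns coming from Kemer polynomials and from the Braun--Kemer--Razmyslov bound on the radical length. Because every generator of $I$ is multilinear and strongly $G$-graded (see the remark in the text), each reduction step rewrites a monomial of a given multidegree as a linear combination of monomials of exactly the same multidegree. Consequently $\mathcal{B}$ respects the $\mathbb{Z}^{r}$-grading, and $c_{(d_1,\ldots,d_r)}$ equals the number of elements of $\mathcal{B}$ of multidegree $(d_1,\ldots,d_r)$.

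The final step lifts the univariate rationality to a multivariate one. I would argue that $\mathcal{B}$ is a regular language over the finite alphabet $X_G$: by the nilpotency bound for the Jacobson radical one needs only bounded memory to forbid ``too many'' consecutive radical substitutions, and by Kemer's finite-basis theorem for $G$-graded $T$-ideals the semisimple substitution patterns are parametrized by finitely many states. Once $\mathcal{B}$ is so realized, assigning weight $t_i$ to every transition that reads the letter $x_{g_i}$ and applying the transfer-matrix method yields
\[
H_{F\langle X_G\rangle/I}(t_1,\ldots,t_r)=\sum_{w\in\mathcal{B}} t_1^{|w|_{x_{g_1}}}\cdots t_r^{|w|_{x_{g_r}}}
\]
as a rational function in $t_1,\ldots,t_r$, since the weighted transition matrix has entries polynomial in the $t_i$'s.

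The main obstacle, inherited from Theorem \ref{main basic theorem}, is to confirm that the normal-form condition really is detectable with bounded left-to-right memory, i.e.\ by a finite automaton. This is the essential content of the univariate rationality theorem in the ordinary PI case (\cite{Belov}, \cite{BR}), and must be transferred to the $G$-graded setting using the machinery of \cite{AB}. Once it is in place, the multivariate strengthening is almost automatic: only the refinement of the weight of each transition from $t$ to $t_i$ is required.
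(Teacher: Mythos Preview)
Your proposal rests on a misreading of how Theorem~\ref{main basic theorem} (and the results in \cite{Belov}, \cite{BR}) are actually proved. Neither this paper nor those references constructs a linear basis of $\Phi$ consisting of monomials forming a regular language; the ``normal-form'' basis you invoke in your second paragraph does not appear anywhere, and the heuristic that Kemer polynomials and the nilpotency bound furnish a finite list of forbidden factors is not correct. Identities of a PI algebra are not monomial relations, so pattern-avoidance does not cut out a basis, and there is no known finite automaton recognizing a monomial basis of a general relatively free PI algebra. You yourself flag this as ``the main obstacle'' and then assert it is ``the essential content'' of \cite{Belov}, \cite{BR}; it is not. This is a genuine gap, not a detail to be filled in.

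The paper's argument is entirely different and in fact proves Theorem~\ref{main multivariate} simultaneously with the univariate version, as a special case of Theorem~\ref{main full}. One supposes a non-rational counterexample exists, uses the $G$-graded Specht property to pass to a \emph{maximal} such $T$-ideal $I$, and shows via Lemma~\ref{SECOND REDUCTION} that the corresponding relatively free algebra is PI-equivalent to a single basic algebra $A$. Realizing $\Phi$ as the generic subalgebra $\mathcal{A}\subset A_K$ and invoking an essential Shirshov base in the $e$-component, one obtains an affine commutative ring $C$ of characteristic values over which $\mathcal{A}_C$ is a finite module; any $C$-submodule then has rational multivariate Hilbert series by Lemma~\ref{RATIONALITY FOR FINITE MODULES}. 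The key point is that for a Kemer polynomial $f$ of $A$, the $T$-ideal $J=\langle f\rangle_T+I$ strictly contains $I$ (so $\Phi/J$ is rational by maximality) while $J/I$ is closed under multiplication by characteristic values, hence is a $C$-submodule with rational series. Additivity (Lemma~\ref{FIRST REDUCTION}) then contradicts non-rationality of $H_\Phi$. Your observation that $I$ is $\mathbb{Z}^r$-multihomogeneous is correct and is implicitly used throughout, but the rationality itself comes from finite generation over a commutative Noetherian ring, not from automata.
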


\begin{remark}
Clearly, Theorem \ref{main basic theorem} follows from Theorem
\ref{main multivariate} simply by replacing all variables $t_i$ by
a single variable $t$.
\end{remark}

The next generalization of Theorem \ref{main basic theorem} is in
a different direction. We consider the Hilbert series (in one
variable $t$) of a unique homogeneous component. More precisely we
fix $g \in G$ and we consider the set of monomials $\Omega_{g,n}$
of degree $n$ whose homogeneous degree is $g$. We let $c_{g,n}$ be
the dimension of the subspace in $F\langle x_{g_1},\ldots,x_{g_r}
\rangle/I$ spanned by the monomials in $\Omega_{g,n}$ (or rather,
by the elements in $F\langle x_{g_1},\ldots,x_{g_r} \rangle/I$
they represent).

\begin{definition}

With the above notation, the Hilbert series of the $g$-component
of $F\langle x_{g_1},\ldots,x_{g_r} \rangle/I$ is given by

$$
H_{g, F\langle x_{g_1},\ldots,x_{g_r} \rangle/I}(t)=\sum c_{g,n}t^{n}
$$

\end{definition}

\begin{theorem} \label{main single comp}

With the above notation. The Hilbert series $ H_{g, F\langle
x_{g_1},\ldots,x_{g_r} \rangle/I}(t)$ is a rational function.

\end{theorem}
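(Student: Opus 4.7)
The plan is to prove a refinement of Theorem~\ref{main multivariate} in which the multivariate Hilbert series is further graded by $h\in G$, and then to recover Theorem~\ref{main single comp} by specializing all variables to a single $t$. Concretely, because $I$ is $G$-graded (being generated by strongly $G$-graded polynomials) and also $\mathbb{N}^r$-multi-homogeneous (standard Vandermonde argument in characteristic zero, applied to the scaling substitutions $x_{g_i}\mapsto \lambda_i x_{g_i}$), the quotient $\Phi = F\langle X_G\rangle/I$ carries an $\mathbb{N}^r\times G$-grading. Writing $c_{\vec d,h}$ for the dimension of the $(\vec d,h)$-piece, I would define
$$H^{(h)}(t_1,\ldots,t_r) \;=\; \sum_{\vec d\in \mathbb{N}^r} c_{\vec d,h}\,t_1^{d_1}\cdots t_r^{d_r},$$
and observe that $H_{g,\Phi}(t) = H^{(g)}(t,\ldots,t)$. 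It therefore suffices to show $H^{(h)}$ is rational for every $h\in G$.

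To do so, I would refine the proof of Theorem~\ref{main multivariate} so that it tracks $G$-degrees. That proof follows the Kemer program for $G$-graded PI algebras (cf.~\cite{AB}): using Kemer polynomials, representability of the relatively free $G$-graded algebra, and nilpotency of the Jacobson radical of a PI algebra, one builds a Shirshov-style normal form in which every element of $\Phi$ is a linear combination of expressions $M\cdot C_1^{n_1}\cdots C_s^{n_s}$, $(n_1,\ldots,n_s)\in\mathbb{N}^s$, with $M$ ranging over a finite set of boundary monomials and $C_1,\ldots,C_s$ pairwise commuting central elements of specified $\mathbb{N}^r$-multi-degrees. Because $I$ is $G$-graded, the Kemer polynomials, the central elements, and the boundary monomials can all be chosen $G$-homogeneous. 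Hence the $G$-degree of $M\cdot C_1^{n_1}\cdots C_s^{n_s}$ equals $\deg_G(M)\cdot\prod_i \deg_G(C_i)^{n_i}$, and the $\deg_G(C_i)\in G$ pairwise commute and generate an abelian subgroup $A\le G$.

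For fixed $M$ and fixed $h$, the set $S_{M,h}$ of $\vec n\in \mathbb{N}^s$ with $\deg_G(M)\prod_i \deg_G(C_i)^{n_i}=h$ is either empty or a finite union of translates of a submonoid of $\mathbb{N}^s$ cut out by an affine sublattice of $\mathbb{Z}^s$. Its multivariate generating series in the $t_i$'s is the Hilbert series of a finitely generated abelian monoid, hence a rational function. Summing over the finite collection of boundary words $M$ yields $H^{(h)}(t_1,\ldots,t_r)$ as a rational function, and the specialization $t_i\mapsto t$ finishes the argument.

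The main obstacle is executing the Kemer/Shirshov normal-form reduction $G$-equivariantly — that is, verifying that the boundary monomials, Kemer polynomials, and central elements can all be chosen $G$-homogeneous. This should be natural because $I$ is $G$-graded from the outset and every ingredient (substitution, polynomial, reduction) in the Kemer program for $G$-graded PI algebras admits a $G$-graded analogue; still, the care required to keep the $G$-degree bookkeeping consistent throughout the Shirshov reduction — in particular, verifying that central elements of \emph{commuting} $G$-degrees can be found — is where the real work lies.
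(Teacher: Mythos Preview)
Your plan correctly identifies that one should track the $G$-degree through the Kemer/Shirshov machinery, but the execution has a genuine gap. The ``Shirshov-style normal form'' you describe --- a spanning set of products $M\cdot C_1^{n_1}\cdots C_s^{n_s}$ with the $C_i$ pairwise commuting central elements of $\Phi$ --- is not what the theory actually provides. The essential Shirshov base elements $y_i$ are monomials in the generators and are \emph{not} central in the relatively free algebra; what is commutative is the external ring $C$ generated by their characteristic values inside the function field $K$, after one embeds $\Phi$ in a matrix algebra over $K$. More seriously, even granting a spanning set of the shape you want, summing the generating functions over $M$ and over $\vec n\in S_{M,h}$ counts the number of spanning elements in each multi-degree, not the \emph{dimension} of that graded piece. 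Linear dependences among these elements (which certainly exist --- they are precisely the identities in $I$) mean your sum is only an upper bound for $H^{(h)}$, not $H^{(h)}$ itself.

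The paper's argument has a different architecture, and this is where the missing idea lies. It proceeds by contradiction: take $I$ maximal among non-rational $T$-ideals (Specht property), reduce via Lemma~\ref{SECOND REDUCTION} to the case where $\Phi$ is PI-equivalent to a single basic algebra $A$, and pick a Kemer polynomial $f$ of $A$. The $T$-ideal $J$ generated by $f$ over $I$ strictly contains $I$, so $\Phi/(J/I)$ has rational Hilbert series by maximality. The crux is that $J/I$ --- a subspace of $\Phi$ itself, not of the scalar extension $\mathcal{A}_C$ --- is closed under multiplication by the characteristic values (Kemer polynomials absorb them; this is Proposition~8.2 of \cite{AB}). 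Hence $J/I$ is a finitely generated graded $C$-module and has rational Hilbert series by the standard Noetherian argument; Lemma~\ref{FIRST REDUCTION} then gives the contradiction. The $g$-component refinement that yields Theorem~\ref{main single comp} is automatic in this setup: the essential Shirshov base is taken in the $e$-component (Proposition~\ref{Essential Shirshov base in the e-component}), so the characteristic values are scalars in $K$ and the $C$-action preserves $G$-degree. Each $g$-component of $J/I$ is therefore itself a finite $C$-module. Your worry about arranging central elements with \emph{commuting} $G$-degrees is thus resolved not by a delicate choice but by the fact that the relevant base lives entirely in degree $e$.
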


The case where $g=e$ is of particular interest. Moreover we could
consider the Hilbert series of any collection of $g$-homogeneous
components and in particular the Hilbert series which
corresponds to a subgroup $H$ of $G$.

Finally we combine the generalizations which appeared in Theorems
\ref{main multivariate} and \ref{main single comp}. Fix an element
$g \in G$. For any $r$-tuple $(d_1,...,d_r)$ of non-negative
integers we consider the monomials with $x_{g_1}$ appearing $d_1$
times, $x_{g_2}$ appearing $d_2$ times, $\ldots$, whose
homogeneous degree is $g \in G$. We denote this set of monomials
by $\Omega_{g,(d_1,...,d_r)}$ and we let $c_{g,(d_1,...,d_r)}$ be
the dimension of the space in $F\langle x_{g_1},\ldots,x_{g_r}
\rangle/I$ spanned by elements whose representatives are the
monomials in $\Omega_{g,(d_1,...,d_r)}$.

\begin{theorem} \label{main full}

The Hilbert series

$$
H_{g,F\langle x_{g_1},\ldots,x_{g_r} \rangle/I}(t_1,\ldots,t_r) =
\sum c_{g,(d_1,...,d_r)}t_{1}^{d_1}\cdots t_{r}^{d_r}
$$
which corresponds to the $g$-component of the $G$-graded algebra
$F\langle x_{g_1},\ldots,x_{g_r} \rangle/I$ is a rational
function.

\end{theorem}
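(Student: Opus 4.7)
The plan is to reduce the theorem to the classical fact that the multivariate Hilbert series of a finitely generated multigraded module over a finitely generated commutative multigraded algebra is a rational function. First, since $I$ is a PI $G$-graded $T$-ideal, the $G$-graded representability theorem proved in \cite{AB} produces a finite dimensional $G$-graded algebra $A$ (over a suitable extension of $F$, which does not affect graded dimensions) with $I = \Id_G(A)$. Thus $\Phi := F\langle X_G\rangle/I$ is the $G$-graded relatively free algebra of $A$ in the homogeneous generators $\bar x_{g_1},\ldots,\bar x_{g_r}$.

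Next, I would realize $\Phi$ concretely via generic elements. Pick a $G$-homogeneous $F$-basis $\{a_h\}_{h\in H}$ of $A$, introduce commuting indeterminates $\lambda_{i,h}$ for each pair $(i,h)$ with $\deg a_h = g_i$, and set
$$\xi_i \;=\; \sum_{h:\,\deg a_h = g_i} \lambda_{i,h}\otimes a_h \;\in\; F[\Lambda]\otimes_F A, \qquad i=1,\ldots,r,$$
where $\Lambda$ is the finite set of all $\lambda_{i,h}$. Declaring each $\lambda_{i,h}$ to have $\mathbb N^r$-degree $e_i$ and trivial $G$-degree, $F[\Lambda]\otimes_F A$ inherits a $(G\times\mathbb N^r)$-multigrading, and the subalgebra $\mathcal B$ generated by the $\xi_i$ is a multigraded subalgebra. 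The standard theory of generic elements gives a $(G\times\mathbb N^r)$-graded isomorphism $\Phi\cong\mathcal B$ via $\bar x_{g_i}\mapsto\xi_i$, so that $c_{g,(d_1,\ldots,d_r)}=\dim_F\mathcal B_{g,(d_1,\ldots,d_r)}$, and it suffices to prove rationality of the multi-Hilbert series of the $g$-homogeneous slice of $\mathcal B$.

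The core step---and the main obstacle---is to exhibit a finitely generated commutative $\mathbb N^r$-multigraded subalgebra $C\subset F[\Lambda]\otimes A$, concentrated in the trivial $G$-component, over which $\mathcal B$ is a finitely generated module. For this I would invoke the $G$-graded Shirshov--Kemer machinery developed alongside the representability theorem in \cite{AB}: the existence of Kemer polynomials for the PI $T$-ideal $I$, combined with the nilpotency of the Jacobson radical of an affine PI algebra (\cite{Braun_nilpotent}, \cite{Kemer_Hilbert}, \cite{Rasmy-Nilpotency}), yields a finite collection of $G$-homogeneous words $u_1,\ldots,u_s$ in the $\xi_i$ and an integer $N$ such that every monomial in the $\xi_i$ reduces modulo $I$ to an $F$-linear combination of Shirshov-type patterns $v_0\,u_{j_1}^{m_1}v_1\cdots u_{j_k}^{m_k}v_k$ with $k\le N$ and each $v_\ell$ of length $\le N$. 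Taking $C$ to be the commutative subalgebra generated by suitable trace-type functions of the $u_j$'s (or by powers $u_j^{|G|}$ made central via such traces) realizes $\mathcal B$ as a finitely generated $(G\times\mathbb N^r)$-graded $C$-module.

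Once this is in place, $\mathcal B$ decomposes as a direct sum of $C$-submodules $\bigoplus_{h\in G}\mathcal B_h$ indexed by $G$ (since $C$ sits in the trivial $G$-component), and each $\mathcal B_h$ remains finitely generated over the Noetherian commutative $\mathbb N^r$-graded algebra $C$. The classical multivariate Hilbert--Serre theorem applied to $\mathcal B_g$ then gives
$$\sum_{\mathbf d}\dim_F\mathcal B_{g,\mathbf d}\,t^{\mathbf d}\;=\;\frac{P_g(t_1,\ldots,t_r)}{\prod_j(1-t^{\mathbf d_j})}$$
for some polynomial $P_g$ and weights $\mathbf d_j\in\mathbb N^r$. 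The left-hand side is precisely $H_{g,F\langle x_{g_1},\ldots,x_{g_r}\rangle/I}(t_1,\ldots,t_r)$, so Theorem~\ref{main full} follows.
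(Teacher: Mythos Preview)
Your proposal has a genuine gap at the ``core step.'' You assert that one can choose a finitely generated commutative multigraded algebra $C$ (generated by trace-type functions of the Shirshov words $u_j$, or by powers $u_j^{|G|}$) so that $\mathcal B$ itself is a finitely generated $C$-module. This fails: the characteristic values of the $u_j$ (traces, coefficients of characteristic polynomials) live in the ambient algebra $F[\Lambda]\otimes A$ but in general do \emph{not} lie in $\mathcal B$. Already in the ungraded case of generic $n\times n$ matrices the trace ring strictly contains the generic matrix algebra, and the latter is not a module over the ring of central invariants sitting inside it (its center is just $F$). The alternative you mention, taking $u_j^{|G|}$, does not help either: even when $u_j$ is $e$-homogeneous it is not central in $\mathcal B$, so the subalgebra these powers generate is not commutative and $\mathcal B$ is not a module over it.

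The paper confronts exactly this obstruction and works around it by an indirect argument. One argues by contradiction and, via the $G$-graded Specht property, takes $I$ maximal among non-rational $T$-ideals; a short inclusion--exclusion (Lemma~\ref{SECOND REDUCTION}) then forces $F\langle X_G\rangle/I$ to be PI-equivalent to a single basic algebra $A$. One does form the extended algebra $\mathcal A_C$ (adjoining the characteristic values of an essential Shirshov base in the $e$-component), and $\mathcal A_C$ is indeed finite over $C$ --- but one does \emph{not} claim $\mathcal A$ is a $C$-module. Instead, the decisive input is Proposition~8.2 of \cite{AB}: if $f$ is a Kemer polynomial of $A$ and $J$ is the $T$-ideal it generates together with $I$, then $J/I$ \emph{is} closed under multiplication by the characteristic values, hence is a $C$-submodule of $\mathcal A_C$ and has rational Hilbert series. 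Since $J\supsetneq I$, maximality gives rationality for $F\langle X_G\rangle/J$, and additivity (Lemma~\ref{FIRST REDUCTION}) yields the contradiction. Your outline recovers the generic-element realization and the Shirshov base, but misses the Kemer-polynomial/$C$-stability step and the maximality induction that make the argument go through.
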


\begin{remark}

Clearly, Theorems \ref{main basic theorem}, \ref{main
multivariate}, \ref{main single comp} are direct corollaries of
Theorem \ref{main full}.

\end{remark}

Theorem \ref{main full} is proved in Section \ref{Prelim-PROOF}.
As mentioned above the proof uses ingredients from the proof of
the representability of relatively free $G$-graded, affine, PI
algebras. For the reader convenience we recall in the first part
of the section the required results from \cite{AB} which are used
in the proof.

In Section \ref{SPECIAL CASE} we consider the $T$-ideal of
$G$-graded identities $I$ of the group algebra $FG$ and show, in a
rather direct way, the rationality of the Hilbert series of the
corresponding relatively free algebra $F\langle
x_{g_1},\ldots,x_{g_r} \rangle/I$. We also give in this case (see
Corollary \ref{codimension for FG}) the asymptotic behavior of the
corresponding codimension sequence. Furthermore, these results can
be easily extended to twisted group algebras $F^{\alpha}G$, where
$\alpha \in Z^{2}(G,F^{*})$ (see Remark \ref{twisting}). The
$G$-grading determined by twisted group algebras is called ``fine"
and it plays an important role in the classification of $G$-graded
simple algebras (see \cite{BSZ}).

We close the introduction by mentioning that one may consider the
Hilbert series of an affine $G$-graded free algebra. It is easy to
see that the corresponding Hilbert series is a rational function.
Of course one may ask whether the different generalizations (Theorems \ref{main multivariate}
and \ref{main single comp}) apply also in this case (i.e. when
$I=0$). It is easy to see that this is indeed the case when
extending to multivariate series. As for the second
generalization, we refer the reader to \cite{FM}. The authors
prove the rationality of the Hilbert series which corresponds to
the sub algebra of coinvariant elements, that is, in case $g=e$.

\end{section}

\begin{section}{Preliminaries and proofs} \label{Prelim-PROOF}

We start this section by recalling some facts on $G$-graded
algebras $W$ over a field of characteristic zero $F$ and their
corresponding $G$-graded identities. We refer the reader to
\cite{AB} for a detailed account on this topic.

Let $W$ be an affine $G$-graded PI algebra over $F$. We denote by
$I=\id_{G}(W)$ the ideal of $G$-graded identities of $W$. These
are polynomials in the free $G$-graded algebra over $F$ which are
generated by $X_{G}$ and that vanish upon any admissible
evaluation on $W$. Here $X_{G}=\bigcup X_{g}$ and $X_{g}$ is a set
of countably many variables of degree $g$. An evaluation is
admissible if the variables from $X_{g}$ are replaced only by
elements from $W_{g}$. It is known that $I$ is a $G$-graded
$T$-ideal, i.e. closed under $G$-graded endomorphisms of $F
\langle X_{G} \rangle$.

We recall from \cite{AB} that the $T$-ideal $I$ is generated by
multilinear polynomials. Consequently, the $T$-ideal of identities
does not change when passing to $\overline{F}$, the algebraic
closure of $F$, in the sense that the ideal of identities of
$W_{\overline{F}}$ over $\overline{F}$ is the span (over
$\overline{F}$) of the $T$-ideal of identities of $W$ over $F$. It
is easily checked that the Hilbert series remains the same when
passing to the algebraic closure of $F$. Thus, from now on we
assume $F=\overline{F}$ .

Next, we recall some terminology and some facts from Kemer theory
extended to the context of $G$-graded algebras as they appear in
\cite{AB}. We start with the concept of alternating polynomial on
a set of variables.

Let $f(x_{1,g},\ldots,x_{r,g};y_{1},\ldots, y_{n})$ be a
multilinear polynomial with variables $(x_{1,g},\ldots,x_{r,g})$,
homogeneous of degree $g$, and some other variables $y$'s,
homogeneous of unspecified degrees. We say that $f$ is alternating
on the set $x_{1,g},\ldots,x_{r,g}$ if there is a multilinear
polynomial $h(x_{1,g},\ldots,x_{r,g};y_{1},\ldots, y_{n})$ such
that

$$
f(x_{1,g},\ldots,x_{r,g};y_{1},\ldots, y_{n})=\sum_{\sigma \in
Sym(r)}(-1)^{\sigma}h(x_{\sigma(1),g},\ldots,x_{\sigma(r),g};y_{1},\ldots,
y_{n}).
$$

We say that a polynomial $f$ alternates on a collection of
disjoint sets of homogeneous variables (each set constitute of
variables of the same degree), if it is alternating on each set.

In the sequel we will need to consider multilinear polynomials $f$
which alternates on $d$ disjoint sets of $g$-elements, each of
cardinality $r$. More generally, we will consider multilinear
polynomials such that for any $g \in G$, $f$ contains $n_{g}$
disjoint sets of variables of homogeneous degree $g$ and each set
of cardinality $d_{g}$.

We recall from \cite{AB} that a $G$-graded polynomial with an
alternating sets of $g$-homogeneous variables which is ``large
enough" is necessarily an identity. More precisely, for any affine
PI $G$-graded algebra $W$ and for any $g \in G$ there exists an
integer $d_g$ such that any $G$-graded polynomial which has an
alternating set of $g$ variables of cardinality exceeding $d_g$ is
necessarily a $G$-graded identity of $W$.

In particular this holds for a finite dimensional $G$-graded
algebra $A$. Note that in that case, if  a polynomial $f$ has an alternating set of
$g$-homogeneous elements whose cardinality exceeds the dimension
of $A_g$, it is clearly an identity of $A$.

Next we recall that by Wedderburn-Malcev decomposition theorem, a $G$-graded
finite dimensional algebra $A$ over $F$, may be decomposed into the direct
sum of $\overline{A}\oplus J$ (decomposition as vector spaces)
where $J$ is the Jacobson radical ($G$-graded) and $\overline{A}$ is a (semisimple) subalgebra of $A$ isomorphic to
$A/J$ as $G$-graded algebras. As a consequence we have
decompositions of $\overline{A}$ and $J$ to the corresponding
$g$-homogeneous components.

Now, we know that in order to test whether a multilinear
polynomial is an identity of an algebra, it is sufficient to
evaluate its variables on a base and hence, applying the above
decomposition, we may consider semisimple and radical $G$-graded
evaluations. From these considerations we conclude that if a
polynomial has sufficiently many alternating sets of
$g$-homogeneous elements of cardinality that exceeds the dimension
of the $g$-homogeneous component of $\overline{A}$, the polynomial
is necessary an identity of $A$. Indeed, in any evaluation we
either have a semisimple basis element which appears twice or at
least one of the evaluations is radical. In the first case we get
zero as a result of the alternation of two elements which are
equal, while in the second, we get zero if the number of
alternating sets is at least the nilpotency index of $J$. We
therefore see that if a $G$-graded polynomial $f$ has a number of
alternating sets (same cardinality) of $g$-variables which is at
least the nilpotency index of $J$ (and in particular if it has
``sufficiently many") then the cardinality of the sets must be
bounded by the dimension of $\overline{A}_{g}$ if we know that $f$
is a non ($G$-graded) identity of $A$. Thus, if $f$ is a non
($G$-graded) identity of $A$ with $\alpha_{g}$ disjoint
alternating sets of $g$-homogeneous elements of cardinality
$d_g+1$, $g \in G$ where $d_{g}=\dim(\overline{A}_{g})$ then, $
\sum_{g} \alpha_{g} \leq n-1$, where $n$ is the nilpotency index
of $J$.

In \cite{AB} the notion of a finite dimensional $G$-graded
\textit{basic} algebra was introduced. For our exposition here, it
is not necessary to recall its precise definition but only say (as
a result of Kemer's Lemmas 1 and 2 for $G$-graded algebras (see
sections $5$ and $6$ in \cite{AB})) that any basic algebra $A$
admits non-identities $G$-graded polynomials which have
``arbitrary many" (say at least $n$, the nilpotency index of $J$)
alternating sets of $g$-homogeneous variables of cardinality $d_g$
and precisely (a total of) $n-1$ alternating sets of
$g$-homogeneous variables of cardinality $d_g+1$ for some $g \in
G$. These extremal non identities are the so called ``$G$-graded
Kemer polynomials" of the basic algebra $A$.

Thus to each basic algebra $A$ corresponds an $r+1$-tuple of
non-negative integers $(d_{g_1},\ldots,d_{g_r};n-1)$ where $d_{g}$
is the dimension of the $g$-component of the semisimple part of
$A$ and $n$ is the nilpotency index of $J$, the radical of $A$. We
refer to such a tuple as the Kemer point of the basic algebra $A$.

The representability theorem for affine $G$-graded algebras can be stated as follows.

Given an affine PI $G$-graded algebra $W$ over $F$, where $F$ is a
field of zero characteristic, there exists a finite number of
$G$-graded basic algebras $A_1,\ldots,A_m$ over a field extension
$K$ of $F$ such that $W$ satisfies the same $G$-graded identities
as $A_1\oplus \cdots \oplus A_m$. Note that since
$\id_{G}(A_1\oplus \cdots \oplus A_n)=\bigcap \id_{G}(A_i)$ we may
assume $\id(A_i)\nsubseteq \id(A_j)$ for every $1 \leq i,j \leq
m$.

\begin{remark}

In fact, by passing to the algebraic closure of $K$, we may assume
that the algebras $A_{i}$ above are finite dimensional over the
same field $F$.

\end{remark}

\begin{definition}

With the above notation, we say that a finite dimensional
$G$-graded algebra $A$ is subdirectly irreducible if it has no non
trivial, two sided $G$-graded ideals $I$ and $J$ such that $I\cap
J = (0)$.

\end{definition}

\begin{remark}

Note that if the algebra $A_1\oplus \cdots \oplus A_m$ is subdirectly irreducible then $m=1$.

\end{remark}

A key ingredient in the proof of Theorem \ref{main full} is the
existence of an essential Shirshov base for the relatively free
algebra $F\langle x_{g_1},\ldots,x_{g_r} \rangle/I$.

For the reader convenience we recall the necessary definitions and
statements from \cite{AB}, starting from the ordinary case (i.e.
ungraded).

\begin{definition}

Let $W$ be an affine PI-algebra over $F$. Let
$\{a_1,\ldots,a_s\}$ be a set of generators of $W$. Let $m$ be a
positive integer and let $Y$ be the set of all words in
$\{a_1,\ldots,a_s\}$ of length $\leq m$. We say that $W$ has
\textit{Shirshov} base of length $m$ and of height $h$ if elements
of the form $y_{i_1}^{k_1}\cdots y_{i_l}^{k_l}$ where $y_{i_i} \in
Y$ and $l\leq h$, span $W$ as a vector space over $F$.

\end{definition}

\begin{theorem}

If $W$ is an affine PI-algebra, then it has a Shirshov
base for some $m$ and $h$. More precisely, suppose $W$ is
generated by a set of elements of cardinality $s$ and suppose it
has PI-degree $m$ (i.e. there exists an identity of
degree $m$ and $m$ is minimal) then $W$ has a Shirshov base of
length $m$ and of height $h$ where $h=h(m,s)$.

\end{theorem}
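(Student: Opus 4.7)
The plan is to prove Shirshov's height theorem by combining a combinatorial lemma on words over a finite alphabet with the multilinear polynomial identity satisfied by $W$. After standard multilinearization we may assume $W$ satisfies a multilinear identity
$$f(x_1,\ldots,x_m) = \sum_{\sigma \in \Sym(m)} \lambda_\sigma x_{\sigma(1)} \cdots x_{\sigma(m)}$$
with $\lambda_{\id} \neq 0$, so that in $W$ every product $u_1\cdots u_m$ of monomials in the generators can be rewritten as an explicit linear combination of the products $u_{\sigma(1)}\cdots u_{\sigma(m)}$ for $\sigma \neq \id$.

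The main combinatorial input is Shirshov's word lemma: for every triple $(m,n,s)$ there is an integer $N = N(m,n,s)$ such that any word of length at least $N$ over an alphabet of size $s$ either (i) contains a subword of the form $v^n$ with $|v| \leq m$, or (ii) contains an ``$n$-divisible'' subword $u_1 u_2 \cdots u_n$ with each $|u_i| \leq m$ and $u_1 > u_2 > \cdots > u_n$ in the lexicographic order on words. I would prove this by induction on $m$, applying the pigeonhole principle to the sequence of length-$m$ blocks read across the word and tracking their lex comparisons to force either a repetition or a strict descent of length $n$.

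Given the word lemma applied with $n \geq m$, one uses the PI to eliminate alternative (ii): if $u_1 > \cdots > u_m$ lexicographically, the concatenation $u_1\cdots u_m$ is the lex-largest among all rearrangements $u_{\sigma(1)}\cdots u_{\sigma(m)}$, and the multilinear identity rewrites it as a linear combination of strictly lex-smaller concatenations. By well-founded induction on the lex order on words, every word of length $\geq N$ is therefore congruent, modulo the identities of $W$, to a linear combination of words admitting no $n$-divisible block-factorization, and hence by the word lemma each such reduced word consists of at most a bounded number of blocks of the form $y^k$ with $|y|\leq m$. Consolidating consecutive blocks built from the same $y$ and counting the admissible distinct $y$'s yields a height bound $h(m,s)$ depending only on $m$ and $s$.

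The principal obstacle is the combinatorial word lemma itself; Shirshov's classical proof requires a delicate nested induction and is the technical heart of the argument. Modern streamlined treatments (for instance in \cite{BelovRowenShirshov}) organize this as an assertion about $d$-decomposable words, but the essential difficulty is the same. The subsequent algebraic reduction via the PI is by contrast quite routine: it amounts to iterating the rewriting rule furnished by the multilinearized identity and terminating by descent in the lex order, with the bound $h(m,s)$ emerging from the finite number of blocks of length $\leq m$ available.
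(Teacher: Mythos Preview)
The paper does not prove this theorem at all: it is stated as a background result recalled from the literature (see the sentence preceding the definitions, ``For the reader convenience we recall the necessary definitions and statements from \cite{AB}''), and is used only as input to the main argument via Theorem~\ref{finite module} and Proposition~\ref{Essential Shirshov base in the e-component}. So there is no proof in the paper to compare against.

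Your outline is the classical Shirshov argument and is essentially correct in spirit. A couple of points of precision are worth tightening. First, your statement of alternative (ii) in the word lemma imposes $|u_i|\le m$ on the pieces of the $n$-divisible subword; the standard dichotomy does not bound the lengths of the $u_i$ (only the period $v$ in alternative (i) is short), and indeed you do not need such a bound to apply the identity. Second, the termination step deserves a slightly more careful order: lex order alone on words of varying length is not well-founded, so one works with the degree-lex order (or fixes the total degree and uses lex within each degree), and one must check that replacing $u_1\cdots u_m$ by $u_{\sigma(1)}\cdots u_{\sigma(m)}$ inside a longer ambient word really decreases that order---this is where the strict inequalities $u_1>\cdots>u_m$ are used. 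With those two clarifications your sketch matches the standard proof found, for instance, in \cite{BR} or \cite{BelovRowenShirshov}.
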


In fact we will need a weaker condition (see \cite{AB})

\begin{definition}

Let $W$ be an affine PI-algebra. We say that a set $Y$ as
above is an \textit{essential Shirshov} base of $W$  (of length
$m$ and of height $h$) if there exists a finite set $D(W)$ such
that the elements of the form $d_{i_1}y_{i_1}^{k_1}d_{i_2} \cdots
d_{i_l}y_{i_l}^{k_l}d_{i_{l+1}}$ where $d_{i_j}\in D(W)$,
$y_{i_j}\in Y$ and $l\leq h$ span $W$.
\end{definition}

An essential Shirshov's base gives the following.

\begin{theorem}\label{finite module}

Let $C$ be a commutative ring and let $W=C\langle
\{a_1,\ldots,a_s\}\rangle$ be an affine algebra over $C$. If $W$
has an essential Shirshov base (in particular, if $W$ has a
Shirshov base) whose elements are integral over $C$, then it is a
finite module over $C$.
\end{theorem}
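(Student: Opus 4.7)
The plan is to use the integrality hypothesis to collapse the unbounded exponents appearing in the essential Shirshov spanning set to a uniform bounded range, thereby producing a finite set of $C$-module generators for $W$.

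First I would unpack the hypothesis. By definition of an essential Shirshov base, $W$ is $C$-spanned by products of the form
$$d_{i_1} y_{i_1}^{k_1} d_{i_2} y_{i_2}^{k_2} \cdots d_{i_l} y_{i_l}^{k_l} d_{i_{l+1}}$$
with $l \leq h$, each $d_{i_j}$ drawn from the finite set $D(W)$ and each $y_{i_j}$ drawn from the finite set $Y$, while the exponents $k_j$ are arbitrary non-negative integers. Since each $y \in Y$ is integral over $C$, fix a monic relation $y^{N_y} = \sum_{j=0}^{N_y-1} c_j^{(y)} y^j$ with $c_j^{(y)} \in C$. Setting $N = \max_{y \in Y} N_y$, an easy induction on $k$ shows that $y^k \in \sum_{j=0}^{N-1} C \cdot y^j$ for every $k \geq 0$ and every $y \in Y$.

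The second step is substitution. Because $C$ acts centrally on $W$, any coefficient $c \in C$ appearing in the expansion of a factor $y_{i_j}^{k_j}$ can be pulled past the adjacent $d$'s and the remaining $y^{k'}$ factors. Consequently each spanning product rewrites as a $C$-linear combination of products of the same shape in which every exponent satisfies $0 \leq k_j \leq N-1$. The collection of such reduced products is finite: the $d$'s and $y$'s lie in finite sets, the length $l$ is bounded by $h$, and each of the at most $h$ exponents has at most $N$ possible values. Thus $W$ is a finitely generated $C$-module, and the ordinary Shirshov-base case is recovered by taking $D(W) = \{1\}$.

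The only point requiring real care is the centrality of $C$, which is what allows the integrality relation to be applied slot by slot, reducing the exponent in one factor without introducing new unbounded exponents elsewhere. Beyond that observation, the argument is a direct bookkeeping step from the definition of an essential Shirshov base, and there is no substantive obstacle.
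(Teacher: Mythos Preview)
Your argument is correct and is the standard proof of this fact. Note, however, that the paper does not actually supply a proof of this theorem: it is stated without proof as a background result recalled from \cite{AB} (see the paragraph preceding the definitions of Shirshov and essential Shirshov bases), so there is no in-paper argument to compare against. The mechanism you describe --- using the integrality relation to replace each $y_{i_j}^{k_j}$ by a $C$-linear combination of $1,y_{i_j},\ldots,y_{i_j}^{N-1}$, and using centrality of $C$ to extract the resulting scalars past the remaining factors --- is exactly the intended one, and your remark that the ordinary Shirshov case is the special case $D(W)=\{1\}$ is also correct.
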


Returning to $G$ graded algebras we have the following.

\begin{proposition}\label{Essential Shirshov base in the
e-component}

Let $W$ be an affine, PI, $G$-graded algebra. Then it has
an essential $G$-graded Shirshov base of elements of $W_{e}$.

\end{proposition}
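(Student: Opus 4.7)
The plan is to deduce the proposition from the classical (ungraded) Shirshov height theorem stated just above, exploiting only the fact that $G$ is finite. First, since $W$ is $G$-graded and affine, any finite generating set can be decomposed into its $G$-homogeneous components, so we may assume $W$ is generated by finitely many $G$-homogeneous elements $a_1,\ldots,a_s$; every word in the $a_j$'s is then automatically $G$-homogeneous. Applying the Shirshov height theorem to $W$ as an ungraded PI-algebra yields a Shirshov base $Y=\{y_1,\ldots,y_N\}$ of some length $m$ and height $h$, where each $y_i$ is a word in the $a_j$'s and therefore $G$-homogeneous of some degree $g_i\in G$.

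The key observation is that, since $|G|<\infty$, we have $g_i^{|G|}=e$, so $\tilde y_i:=y_i^{|G|}\in W_e$. Set $\tilde Y=\{\tilde y_1,\ldots,\tilde y_N\}\subset W_e$ and
$$
D(W)\;=\;\{\,y_i^{\,r}\,:\,1\le i\le N,\ 0\le r<|G|\,\},
$$
a finite set of $G$-homogeneous elements (with the unit, or formally the empty word, included as the element that fills an unused boundary $d$-slot). I claim that $\tilde Y$, together with the auxiliary set $D(W)$, is an essential $G$-graded Shirshov base of $W$ of height $h$. To verify spanning, take any product $y_{i_1}^{k_1}\cdots y_{i_l}^{k_l}$ with $l\le h$ from the original Shirshov expansion, and perform the division with remainder $k_j=|G|\,q_j+r_j$, $0\le r_j<|G|$. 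This rewrites
$$
y_{i_j}^{k_j}\;=\;\bigl(y_{i_j}^{|G|}\bigr)^{q_j}\,y_{i_j}^{r_j}\;=\;\tilde y_{i_j}^{\,q_j}\,d_j,\qquad d_j:=y_{i_j}^{r_j}\in D(W),
$$
and the whole monomial becomes
$$
\tilde y_{i_1}^{\,q_1}\,d_1\,\tilde y_{i_2}^{\,q_2}\,d_2\cdots \tilde y_{i_l}^{\,q_l}\,d_l,
$$
which is exactly of the shape $d_{i_1}y_{i_1}^{k_1}d_{i_2}\cdots d_{i_l}y_{i_l}^{k_l}d_{i_{l+1}}$ demanded by the definition of an essential Shirshov base, with the leading $d$-slot filled by the unit/placeholder, with $\tilde y_{i_j}\in \tilde Y\subset W_e$, with $d_j\in D(W)$, and with $l\le h$ $\tilde y$-blocks. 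Since the original expansions span $W$, so do these rewritten expressions, proving the claim.

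The only real obstacle is bookkeeping: matching the exact alternating shape $d\,y\,d\,y\cdots y\,d$ required by the definition, and confirming that $D(W)$ remains finite. Both are handled transparently by the uniform bound $r_j<|G|$ and by allowing the unit (or a formal empty word) as one of the $d$'s. No further PI-theoretic input beyond the ordinary Shirshov height theorem is needed; all the use of the grading is funneled through the elementary identity $g^{|G|}=e$.
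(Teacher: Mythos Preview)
Your argument is correct and is precisely the standard proof. Note, however, that the present paper does not supply its own proof of this proposition: it merely recalls the statement from \cite{AB} as part of the preliminaries. So there is no in-paper argument to compare against; your write-up in fact fills in what the authors deliberately omitted. The idea you use---replace each Shirshov word $y_i$ by its $|G|$-th power $\tilde y_i = y_i^{|G|}\in W_e$ and absorb the remainders $y_i^{r}$, $0\le r<|G|$, into the finite set $D(W)$---is exactly the argument given in \cite{AB}.

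Two minor bookkeeping points are worth making explicit, since you flag them only in passing. First, the leading $d$-slot: rather than invoking a unit (which $W$ need not possess), it is cleaner to enlarge $D(W)$ to include the elements of $\tilde Y$ themselves, so that when $q_1\ge 1$ one peels off a single $\tilde y_{i_1}$ as the leading $d$. Second, when some $q_j=0$ (i.e., $k_j<|G|$), adjacent $d$-factors coalesce; to keep the alternating shape one replaces $D(W)$ by the finite set of products of at most $h+1$ elements of $D(W)$. Both adjustments preserve finiteness of $D(W)$ and the height bound $h$, so nothing is lost.
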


For the proof of Theorem \ref{main full} we will assume below
there exists a $G$-graded $T$-ideal $I$ which is non-rational
(that is the Hilbert series of $F\langle x_{g_1},\ldots,x_{g_r}
\rangle/I$ is non-rational) and get a contradiction. The next two
lemmas will be used to reduce the problem to the case where $I$ is
maximal with respect to being non-rational and also that the
relatively free algebra $F\langle x_{g_1},\ldots,x_{g_r}
\rangle/I$ is $G$-graded PI equivalent to one basic algebra
(rather than to a direct sum of them).

Before stating the lemmas we simplify the terminology as follows.

\begin{remark}
If $X$ is a subspace of an algebra $U=F\langle x_{g_1},...,x_{g_r}
\rangle/I$ spanned by strongly homogeneous polynomials, then we
may consider naturally its corresponding multivariate Hilbert
series. Then when we say ``Hilbert series of $X$" we mean
``multivariate $G$-graded Hilbert series which corresponds to the
$g$-component of $X$" for any given $g \in G$.

\end{remark}

\begin{lemma} \label{FIRST REDUCTION}

Let $J$ be a $G$-graded $T$-ideal containing $I$. Let $H_U$, $H_{U/J}$ and $H_{J/I}$ be the Hilbert series of
$U$, $U/J$ and $J/I$ respectively.  Then

$$
H_{F\langle x_{g_1},...,x_{g_r} \rangle/I}=H_{F\langle x_{g_1},...,x_{g_r}
\rangle/J}+H_{J/I}.
$$

\end{lemma}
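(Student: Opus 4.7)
The plan is to recognize this as the additivity of Hilbert series on a short exact sequence of multigraded vector spaces. Set $U = F\langle x_{g_1},\ldots,x_{g_r}\rangle/I$. Since $I\subseteq J$, the quotient $J/I$ sits inside $U$ as a two-sided ideal, and we obtain the short exact sequence of vector spaces
$$
0 \longrightarrow J/I \longrightarrow U \longrightarrow U/(J/I) \longrightarrow 0.
$$
By the third isomorphism theorem, $U/(J/I) \cong F\langle x_{g_1},\ldots,x_{g_r}\rangle/J$, so the claim will follow once we show that this sequence respects the multigrading by $(d_1,\ldots,d_r)$ and the $G$-grading by $g$, and that the Hilbert series is additive on such sequences.

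First I would verify that $I$ and $J$, and hence $J/I$, are homogeneous with respect to both the multidegree $(d_1,\ldots,d_r)$ and the $G$-grading. For the $G$-grading this is built into the definition of a $G$-graded $T$-ideal. For the multidegree, the key point is that in characteristic zero a $T$-ideal is closed under the scalar substitutions $x_{g_i} \mapsto \alpha_i x_{g_i}$ with $\alpha_i\in F$; a standard Vandermonde argument then shows that each multihomogeneous component of any element of the $T$-ideal already lies in the $T$-ideal. (Alternatively, one appeals to the fact noted in the excerpt that $I$ and $J$ are generated by strongly $G$-graded multilinear polynomials, which are automatically multihomogeneous after relabelling of variables.)

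Given the multigraded (and $G$-graded) short exact sequence, additivity of dimensions in each fixed multidegree $(d_1,\ldots,d_r)$ and fixed $G$-degree $g$ yields
$$
c_{g,(d_1,\ldots,d_r)}(U) = c_{g,(d_1,\ldots,d_r)}(U/(J/I)) + c_{g,(d_1,\ldots,d_r)}(J/I),
$$
and summing against $t_1^{d_1}\cdots t_r^{d_r}$ gives the claimed identity of Hilbert series. The only subtle step is ensuring multihomogeneity of $J$, and that is immediate from the $T$-ideal property in characteristic zero; the remainder is purely formal.
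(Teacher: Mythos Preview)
Your argument is correct and is essentially the same as the paper's: the paper's one-line proof simply invokes that $J$ is spanned by strongly homogeneous polynomials, which is exactly the multihomogeneity (and $G$-homogeneity) fact you establish, after which the additivity on the short exact sequence is immediate. You have merely spelled out in detail what the paper leaves implicit.
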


\begin{proof}

This is clear since $J$ is spanned by strongly homogeneous polynomials.

\end{proof}

\begin{lemma} \label{SECOND REDUCTION}

Let $I^{'}$ and $I{''}$ be two $G$-graded $T$-ideals which contain
$I$. Then the following holds:

$$
H_{F\langle x_{g_1},...,x_{g_r} \rangle/(I^{'}\cap I^{''})} = H_{F\langle
x_{g_1},...,x_{g_r} \rangle/I^{'}} + H_{F\langle x_{g_1},...,x_{g_r}
\rangle/I^{''}} - H_{F\langle x_{g_1},...,x_{g_r} \rangle/(I^{'}+ I^{''})}
$$

\end{lemma}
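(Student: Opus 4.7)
The plan is to prove this by producing, for each fixed $g \in G$ and each multidegree $(d_1,\ldots,d_r)$, a short exact sequence of finite dimensional $F$-vector spaces whose alternating sum of dimensions gives the asserted identity term by term. Writing $U=F\langle x_{g_1},\ldots,x_{g_r}\rangle$, the key observation (already used implicitly in Lemma \ref{FIRST REDUCTION}) is that every $G$-graded $T$-ideal is spanned by strongly homogeneous polynomials, so $I'$, $I''$, $I' \cap I''$, and $I'+I''$ are all homogeneous with respect to both the $G$-grading and the $\mathbb{N}^r$-multigrading. Consequently each of the four quotient algebras $U/I'$, $U/I''$, $U/(I'\cap I'')$, $U/(I'+I'')$ decomposes as a direct sum of finite dimensional $g$-homogeneous, multidegree $(d_1,\ldots,d_r)$ components, and it suffices to verify the identity on dimensions of these components.

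The exact sequence I would use is the standard Mayer--Vietoris-type sequence
\begin{equation*}
0 \longrightarrow U/(I'\cap I'') \stackrel{\alpha}{\longrightarrow} U/I' \oplus U/I'' \stackrel{\beta}{\longrightarrow} U/(I'+I'') \longrightarrow 0,
\end{equation*}
where $\alpha(u + I'\cap I'') = (u+I',\, u+I'')$ and $\beta(u+I',\, v+I'') = (u-v) + (I'+I'')$. Injectivity of $\alpha$ is clear since $u \in I' \cap I''$ iff $u \in I'$ and $u \in I''$. Surjectivity of $\beta$ is immediate since $\beta(u+I',0+I'') = u + (I'+I'')$. For exactness at the middle, if $\beta(u+I',v+I'')=0$ then $u-v = a+b$ with $a \in I'$, $b \in I''$; setting $w = u - a = v+b$ gives $u + I' = w+I'$ and $v + I'' = w+I''$, so $(u+I',v+I'') = \alpha(w + I'\cap I'')$. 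Since $\alpha$ and $\beta$ respect both the $G$-grading and the multigrading, restricting to the $(g,(d_1,\ldots,d_r))$-components gives an exact sequence of finite dimensional $F$-vector spaces.

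Taking dimensions in each homogeneous component yields
\begin{equation*}
\dim (U/(I'\cap I''))_{g,(d_1,\ldots,d_r)} = \dim(U/I')_{g,(d_1,\ldots,d_r)} + \dim(U/I'')_{g,(d_1,\ldots,d_r)} - \dim(U/(I'+I''))_{g,(d_1,\ldots,d_r)}.
\end{equation*}
Multiplying by $t_1^{d_1}\cdots t_r^{d_r}$ and summing over all $(d_1,\ldots,d_r)\in \mathbb{N}^r$ gives precisely the claimed equality of multivariate Hilbert series for the $g$-component, and hence (via the convention in the Remark preceding the lemma) for any of the Hilbert series considered in Theorems \ref{main basic theorem}--\ref{main full}.

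I do not expect any real obstacle here; the lemma is formal inclusion-exclusion once one checks that the four $T$-ideals are all spanned by their strongly homogeneous parts so that the short exact sequence restricts componentwise. The only point requiring a moment of attention is that this componentwise restriction is legitimate, which is exactly the content of the \emph{strongly homogeneous} hypothesis recalled earlier in the paper.
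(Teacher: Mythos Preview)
Your proof is correct and is exactly the standard inclusion--exclusion/Mayer--Vietoris argument one expects here. The paper itself does not give a proof but simply refers to Lemma~9.40 in \cite{BR}, which establishes the analogous identity by the same mechanism; so your approach coincides with the paper's (deferred) argument.
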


\begin{proof}
The proof is similar to the proof of Lemma 9.40 in \cite{BR} and
hence is omitted.
\end{proof}

Let us assume now that there exist $G$-graded $T$-ideals of
$F\langle x_{g_1},...,x_{g_r}\rangle$ which are PI and such that
the Hilbert series of $F\langle x_{g_1},...,x_{g_r}\rangle/I$ is
non-rational.

\begin{proposition} \label{MAXIMAL AND IRREDUCIBLE}

Under the above assumption there exists a $G$-graded $T$-ideal of
$F\langle x_{g_1},...,x_{g_r}\rangle$ which is PI and

\begin{enumerate}

\item

is maximal with respect to the property that the Hilbert series of
$F\langle x_{g_1},...,x_{g_r}\rangle/I$ is non-rational.

\item

the relatively free algebra $F\langle x_{g_1},...,x_{g_r}\rangle/I$ is $G$-graded PI equivalent to
a $G$-graded basic algebra $A$.

\end{enumerate}

\end{proposition}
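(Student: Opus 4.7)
The plan is to handle (1) directly from the $G$-graded Specht property, then derive (2) by combining the representability theorem of \cite{AB} with the inclusion-exclusion formula of Lemma \ref{SECOND REDUCTION}.

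For (1), the solution of the Specht problem in the $G$-graded setting, proved in \cite{AB}, yields the ascending chain condition on $G$-graded $T$-ideals of $F\langle X_G\rangle$. Any $G$-graded $T$-ideal $J$ containing a PI $G$-graded $T$-ideal $I$ is itself PI, since it still contains an ordinary polynomial. Hence the collection of PI $G$-graded $T$-ideals $J$ with $H_{F\langle X_G\rangle/J}$ non-rational is, by our standing assumption, non-empty and closed under unions of ascending chains; the ACC then delivers a maximal element, which we take as our $I$.

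For (2), I would invoke the representability theorem (applied after extending scalars to $\overline{F}$, which does not affect the Hilbert series) to present the relatively free algebra $F\langle X_G\rangle/I$ as $G$-graded PI equivalent to a direct sum $A_1\oplus\cdots\oplus A_m$ of basic $G$-graded algebras, with the ideals of identities $\id_G(A_i)$ pairwise incomparable. Assume for contradiction that $m\geq 2$ and set $I'=\bigcap_{i=1}^{m-1}\id_G(A_i)$ and $I''=\id_G(A_m)$. Then $I=I'\cap I''$, and incomparability forces $I\subsetneq I'$ and $I\subsetneq I''$: for instance, $I=I'$ would give $I'\subseteq I''$, contradicting the fact that $\id_G(A_m)\nsupseteq\bigcap_{i<m}\id_G(A_i)$. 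In particular $I\subsetneq I'+I''$ as well. By maximality of $I$, the three Hilbert series $H_{F\langle X_G\rangle/I'}$, $H_{F\langle X_G\rangle/I''}$, and $H_{F\langle X_G\rangle/(I'+I'')}$ are all rational, so Lemma \ref{SECOND REDUCTION} expresses $H_{F\langle X_G\rangle/I}$ as an integer combination of rational functions, contradicting the non-rationality of $H_{F\langle X_G\rangle/I}$. Hence $m=1$.

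The main technical obstacle is verifying the strict inclusions $I\subsetneq I'$ and $I\subsetneq I''$ (and hence $I\subsetneq I'+I''$); without them, the quotients $F\langle X_G\rangle/I'$ and $F\langle X_G\rangle/I''$ need not sit strictly above $I$ and the maximality argument collapses. This step rests squarely on the irredundancy clause in the basic decomposition of \cite{AB}, which guarantees that removing any summand $A_j$ strictly enlarges the intersected $T$-ideal of identities. Once this is in hand, the remainder is a purely formal inclusion-exclusion computation in the spirit of Lemma 9.40 of \cite{BR}, delivered here in the $G$-graded setting by Lemma \ref{SECOND REDUCTION}.
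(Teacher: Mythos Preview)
Your argument is correct and follows the paper's proof almost verbatim: part (1) via the $G$-graded Specht property (ACC on $T$-ideals), and part (2) by combining the basic-algebra decomposition from \cite{AB} with the inclusion--exclusion identity of Lemma \ref{SECOND REDUCTION} to derive a contradiction from maximality.

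Two minor remarks on presentation. In (1), the clause ``closed under unions of ascending chains'' is both unproved and unnecessary: ACC alone guarantees a maximal element in any non-empty family, and there is no evident reason why a union of $T$-ideals with non-rational Hilbert series should again have non-rational series. In (2), your sentence ``incomparability forces $I\subsetneq I'$'' is not quite right: pairwise incomparability of the $\id_G(A_i)$ does \emph{not} by itself exclude $\bigcap_{i<m}\id_G(A_i)\subseteq \id_G(A_m)$. What you actually need (and correctly invoke in your final paragraph) is irredundancy of the decomposition, i.e.\ that no $A_j$ can be dropped without enlarging the intersection; this can always be arranged by passing to a minimal subcollection. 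The paper handles this point with the same brevity, simply asserting that each $I_i$ properly contains $I$ and invoking Lemma \ref{SECOND REDUCTION}, so your explicit two-ideal splitting is, if anything, slightly more careful.
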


\begin{proof}

The first assertion follows from the $G$-graded Specht property
(see section $12$ in \cite{AB}). Indeed, if there is no such an
ideal then we get an infinite ascending sequence of ideals which
does not stabilize and this contradicts the fact that the union of
the $T$-ideals is finitely generated. We thus may assume that the
Hilbert series of $F\langle x_{g_1},...,x_{g_r}\rangle/I$ is
non-rational and the Hilbert series of $F\langle
x_{g_1},...,x_{g_r}\rangle/J$ for any $G$-graded $T$-ideal $J$
which properly contains $I$ is a rational function.

For the proof of the second assertion let us show that the
maximality of $I$ already implies that $F\langle
x_{g_1},...,x_{g_r}\rangle/I$ is PI equivalent to a $G$-graded
basic algebra. Assuming the converse, we have that $F\langle
x_{g_1},...,x_{g_r}\rangle/I$ is $G$-graded PI equivalent to a
direct sum of basic algebras $A_1\oplus A_2 \oplus \ldots \oplus
A_m$ where $m\geq 2$ and $\id_{G}(A_i)\nsubseteq \id_{G}(A_j)$ for
any $1\leq i,j \leq m$. It follows that the ideal
$\id_{G}(F\langle x_{g_1},...,x_{g_r}\rangle/I)=\id_{G}(A_1\oplus
A_2 \oplus \ldots \oplus A_m)=\bigcap \id_{G}(A_i)$ and
$\id_{G}(F\langle x_{g_1},...,x_{g_r}\rangle/I) \varsubsetneq
\id_{G}(A_i)$ for any $i$. Now, consider the evaluations $I_{i}$
of the $T$-ideals $\id_{G}(A_i)$ on $F\langle
x_{g_1},...,x_{g_r}\rangle$. Clearly, $I_i$ properly contains $I$
and their intersection is $I$. By Lemma \ref{SECOND REDUCTION} we
conclude that the Hilbert series of $F\langle
x_{g_1},...,x_{g_r}\rangle/I$ is rational. Contradiction.

\end{proof}

It is convenient to view the relatively free algebra $F\langle
x_{g_1},...,x_{g_r}\rangle/I$ as an algebra of generic elements,
$G$-graded embedded in a matrix algebra over a suitable rational
function field over $F$. Indeed, by part (2) of Proposition
\ref{MAXIMAL AND IRREDUCIBLE} we have that $F\langle
x_{g_1},...,x_{g_r}\rangle/I$ is $G$-graded PI equivalent to a
$G$-graded basic algebra which we denote by $A$ and which from now
on will be viewed a $G$-graded subalgebra of the $n \times n$
matrices over $F$ (see \cite{AB}). If
$\{v_{g,1},\ldots,v_{g,s_g}\}$ is an $F$-basis of $A_{g}$, the
$g$-homogeneous component of $A$, we consider different sets of
central indeterminates $\{t_{g,1},\ldots,t_{g,s_g}\}$, $g \in G$
(one set for each generator $x_g$ of the free algebra).

Let $K=F\langle \{t_{g,i}\} \rangle$ be the field of rational
functions on the $t$'s and $A_{K}$ be the algebra over $K$
obtained from $A$ by extending scalars from $F$ to $K$. The
algebra of generic elements will be an $F$-subalgebra of $A_{K}$.
For each variable $x_{g}=x_{g_j}$ in the generating set of
$F\langle x_{g_1},...,x_{g_r}\rangle$ we form an element
$z_{g}=\sum_{i}t_{g,i}v_{g,i}$ in $A_{K}$. Following the embedding
of $A$ in $M_{n}(F)$ we view the generating elements $z_{g}$'s as
$n\times n$-matrices over the field $K$. Observe that these
generating elements are matrices whose entries are homogeneous
polynomials (on the $t$'s) of degree one.

\begin{proposition} [see \cite{AB}]

There is an $F$-isomorphism of $G$-graded algebras of $F\langle
x_{g_1},...,x_{g_r}\rangle/I$ with $\mathcal{A}$, the $F$-sub
algebra of $A_{K}$ generated by the elements
$\{z_{g_1},\ldots,z_{g_r}\}$.

\end{proposition}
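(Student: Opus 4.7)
The plan is to define the obvious map from the free algebra onto the generic subalgebra and check that its kernel is exactly $I$, using PI equivalence of $F\langle x_{g_1},\ldots,x_{g_r}\rangle/I$ with $A$ in both directions.

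First I would set up the universal homomorphism. Since $F\langle x_{g_1},\ldots,x_{g_r}\rangle$ is the free $G$-graded algebra on homogeneous generators of the prescribed degrees, and since $z_{g_i}=\sum_j t_{g_i,j}v_{g_i,j}$ lies in $(A_K)_{g_i}=A_{g_i}\otimes_F K$, there is a unique $G$-graded $F$-algebra homomorphism
\[
\pi\colon F\langle x_{g_1},\ldots,x_{g_r}\rangle \longrightarrow A_K,\qquad x_{g_i}\mapsto z_{g_i}.
\]
By construction its image is precisely $\mathcal{A}$, so it induces a surjective $G$-graded map $\bar\pi\colon F\langle x_{g_1},\ldots,x_{g_r}\rangle/\ker(\pi)\twoheadrightarrow\mathcal{A}$. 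The content of the proposition is that $\ker(\pi)=I$.

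Next I would show $I\subseteq\ker(\pi)$. By part (2) of Proposition \ref{MAXIMAL AND IRREDUCIBLE} (or rather the underlying representability theorem we are invoking), $F\langle x_{g_1},\ldots,x_{g_r}\rangle/I$ and $A$ satisfy the same $G$-graded identities in the specified variables, so $I$ is contained in the $T$-ideal $\id_G(A)$ evaluated on the free generators $x_{g_1},\ldots,x_{g_r}$. Since the assignment $x_{g_i}\mapsto z_{g_i}$ is an admissible $G$-graded substitution in $A_K$, and any $G$-graded identity of $A$ over $F$ remains a $G$-graded identity after the flat scalar extension $A\rightsquigarrow A_K$, every $f\in I$ satisfies $\pi(f)=f(z_{g_1},\ldots,z_{g_r})=0$.

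For the reverse inclusion $\ker(\pi)\subseteq I$, suppose $f(z_{g_1},\ldots,z_{g_r})=0$ in $A_K$. Substituting the expressions $z_{g_i}=\sum_j t_{g_i,j}v_{g_i,j}$ and expanding, the result is an element of $A\otimes_F F[\{t_{g,i}\}]$ whose coefficient at every monomial in the $t$'s must vanish. Since $F$ has characteristic zero (in particular is infinite), polynomial identities among $F$-valued functions of the $t_{g,i}$ are identities in the formal variables; specializing the $t_{g,j}$'s to arbitrary scalars $\alpha_{g,j}\in F$ therefore yields $f(a_{g_1},\ldots,a_{g_r})=0$ in $A$ for every choice $a_{g_i}=\sum_j\alpha_{g_i,j}v_{g_i,j}\in A_{g_i}$. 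As $\{v_{g,j}\}$ is an $F$-basis of $A_g$, these $a_{g_i}$'s range over all of $A_{g_i}$, so $f$ is a $G$-graded identity of $A$ in the prescribed variables; PI equivalence then gives $f\in I$.

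The only delicate step is the coefficient-separation argument in the last paragraph, which silently uses that $F$ is infinite together with the fact that the $t_{g,i}$ are genuinely algebraically independent central indeterminates; this is what makes the $z_{g_i}$ truly generic and is the technical heart of the equivalence.
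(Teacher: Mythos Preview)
Your argument is correct and is the standard one. Note, however, that the paper does not supply its own proof of this proposition: it simply cites \cite{AB} and treats the statement as known. So there is no in-paper proof to compare against; what you have written is precisely the generic-element argument that underlies the cited result, and it goes through as stated. The one place worth tightening is the sentence ``$I$ is contained in the $T$-ideal $\id_G(A)$ evaluated on the free generators'': in fact $I$ \emph{equals} the set of $G$-graded identities of $A$ in the variables $x_{g_1},\ldots,x_{g_r}$, since $f\in I$ if and only if $f$ vanishes in $F\langle x_{g_1},\ldots,x_{g_r}\rangle/I$, which (because $I$ is a $T$-ideal) is equivalent to $f$ being a $G$-graded identity of that quotient, hence of $A$ by PI equivalence. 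You use both inclusions anyway, so this is only a matter of phrasing.
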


\begin{remark}
In the rest of the proof we use without further notice the
identification of the relatively free algebra with the algebra of
generic elements $\mathcal{A}$ embedded in $M_{n}(K)$.

\end{remark}

Now we recall from Proposition \ref{Essential Shirshov base in the
e-component} that the relatively free algebra $F\langle
x_{g_1},...,x_{g_r}\rangle/I$ has an essential Shirshov $\Theta$
base which is contained in the $e$-component. Picking the natural
set of generators of $F\langle x_{g_1},...,x_{g_r}\rangle/I$ we
note that $\Theta$ consists of monomials on the $x_{g_i}$'s and
hence, viewed in $\mathcal{A}$, they consist of matrices over $K$
whose entries are homogeneous polynomial in the $t_i$'s. It then
follows that the characteristic values of the elements in $\Theta$
are homogeneous polynomials on the $t_i$'s.

Denote by $C$ the algebra over $F$ generated by these homogeneous
polynomials. Note that $C$ is an affine commutative algebra (the
essential Shirshov base is finite). Moreover, if we extend the
algebra of generic elements $\mathcal{A}$ to $C$ we have (by
Theorem \ref{finite module}) that $\mathcal{A_{C}}$ is a finite
module over $C$.

For $\mathcal{A_{C}}$ we know the following result.

\begin{lemma} \label{RATIONALITY FOR FINITE MODULES}

The Hilbert series of $\mathcal{A_{C}}$ is rational. More
generally, the rationality of the Hilbert series is independent of
the integer degree given to the generators.

\end{lemma}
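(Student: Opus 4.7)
The plan is to reduce the statement to the classical Hilbert--Serre theorem by exhibiting $C$ as a multigraded affine commutative $F$-algebra and $\mathcal{A}_C$ as a finitely generated multigraded $C$-module with multi-homogeneous generators.

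First I would pin down the multigrading. Assign to each generator $x_{g_i}$ the multidegree $e_i \in \mathbb{N}^r$, so that $\mathcal{A}$ carries the natural $\mathbb{N}^r$-grading counting occurrences of the $x_{g_i}$'s. Under the identification with the algebra of generic elements inside $M_n(K)$, each entry of $z_{g_i}$ is linear in the central indeterminates $t_{g_i,\bullet}$; consequently any monomial in the $z_{g_i}$'s of multidegree $(d_1,\ldots,d_r)$ has matrix entries which are polynomials multi-homogeneous of that same multidegree in the $t$'s. In particular the characteristic coefficients of a monomial in $\Theta$ of multidegree $\mathbf{d}$ are themselves multi-homogeneous (the $k$-th coefficient having multidegree $k\mathbf{d}$). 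Since by construction $C$ is the $F$-algebra generated by such characteristic coefficients of elements of $\Theta$, the ring $C$ is an affine, $\mathbb{N}^r$-graded, commutative $F$-algebra with multi-homogeneous generators.

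Next I would invoke the two classical facts. Because $C$ is a finitely generated commutative $F$-algebra with homogeneous generators, the classical Hilbert--Serre theorem gives that its multivariate Hilbert series $H_C(t_1,\ldots,t_r)$ is a rational function of the form $p(t_1,\ldots,t_r)/\prod_j (1 - t_1^{a_{j,1}}\cdots t_r^{a_{j,r}})$. By Theorem \ref{finite module}, $\mathcal{A}_C$ is a finitely generated $C$-module; and the module generators can be taken to be multi-homogeneous, because the essential Shirshov description expresses $\mathcal{A}_C$ as the $C$-span of finitely many expressions $d_{i_1}y_{i_1}^{k_1}\cdots d_{i_l}y_{i_l}^{k_l}d_{i_{l+1}}$ with $l \leq h$, $y_{i_j}\in\Theta$ and $d_{i_j}\in D(W)$, all of which are multi-homogeneous monomials in the $x_{g_i}$'s. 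The graded version of Hilbert--Serre for finitely generated graded modules over a Noetherian graded ring (and $C$ is Noetherian by Hilbert's basis theorem) then yields that $H_{\mathcal{A}_C}(t_1,\ldots,t_r)$ is a rational function.

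Finally, for the ``more generally'' clause, given any assignment of positive integer weights $\nv=(n_1,\ldots,n_r)$ to the generators $x_{g_1},\ldots,x_{g_r}$, the corresponding single-variable Hilbert series is obtained from $H_{\mathcal{A}_C}(t_1,\ldots,t_r)$ by the specialization $t_i \mapsto t^{n_i}$; rationality is preserved under such a substitution. The only real point requiring care is verifying that the generators of $C$ and the module generators of $\mathcal{A}_C$ over $C$ can simultaneously be chosen multi-homogeneous with respect to the $\mathbb{N}^r$-grading on $M_n(K)$ coming from the $t$-indeterminates, and this is where the explicit generic-matrix construction from \cite{AB} is essential; once that is in place the Hilbert--Serre machinery delivers the conclusion directly.
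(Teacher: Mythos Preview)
Your argument is correct and follows the same route as the paper: the paper's proof is the one-line observation that $\mathcal{A}_C$ is a finite module over an affine commutative domain and then cites \cite{BR}, Prop.~9.33 for the rationality; you have simply unpacked that citation, checking explicitly that $C$ and $\mathcal{A}_C$ inherit the $\mathbb{N}^r$-multigrading from the generic construction so that the graded Hilbert--Serre theorem applies, and reading the ``more generally'' clause as specialization $t_i\mapsto t^{n_i}$.
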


\begin{proof}

Indeed, the algebra is a finitely generated module over an affine
domain and hence its Hilbert series is rational. (see \cite{BR},
Prop. 9.33).

\end{proof}

More generally, we may consider $C$-submodules $M$ of
$\mathcal{A_{C}}$ which are generated by strongly homogeneous
polynomials on the $x_{g_i}$'s.

\begin{lemma}

The Hilbert series of $M$ is rational.

\end{lemma}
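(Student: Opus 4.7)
The plan is to reduce the rationality claim to the classical Hilbert--Serre theorem for finitely generated multigraded modules over finitely generated multigraded commutative Noetherian rings, along the same lines as Lemma~\ref{RATIONALITY FOR FINITE MODULES} but extended from $\mathcal{A}_{C}$ itself to an arbitrary graded submodule.

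First I would record the relevant gradings. The ring $C$ inherits a natural $\mathbb{N}^{r}$-multigrading from the block structure of the central indeterminates $\{t_{g_j,i}\}$: the generic element $z_{g_j} = \sum_{i} t_{g_j,i} v_{g_j,i}$ is linear in the $j$-th block of $t$'s, so a monomial in the $x_{g_i}$'s with $d_j$ occurrences of $x_{g_j}$ becomes a matrix whose entries are homogeneous of multidegree $(d_1,\ldots,d_r)$ in the block-grading. In particular, the characteristic values of the (monomial) elements of the essential Shirshov base $\Theta$ are homogeneous in this sense, so $C$ is a finitely generated $\mathbb{N}^{r}$-graded commutative $F$-algebra whose grading, via $\mathcal{A}\simeq F\langle x_{g_1},\ldots,x_{g_r}\rangle/I$, coincides with the multigrading by occurrences of the $x_{g_i}$'s. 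The $G$-grading and the $\mathbb{N}^{r}$-multigrading on $\mathcal{A}$ extend compatibly to $\mathcal{A}_{C}$.

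Next, since $M$ is generated over $C$ by strongly homogeneous polynomials, it is simultaneously $G$-homogeneous and $\mathbb{N}^{r}$-multihomogeneous as a $C$-submodule. By the Hilbert basis theorem $C$ is Noetherian, and by Theorem~\ref{finite module} $\mathcal{A}_{C}$ is a finite $C$-module, hence a Noetherian $C$-module. Consequently, for every $g\in G$, the $g$-component $M_g = M\cap (\mathcal{A}_{C})_g$ is a finitely generated $\mathbb{N}^{r}$-graded $C$-submodule.

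Finally, the classical Hilbert--Serre theorem says that the multivariate Hilbert series of a finitely generated $\mathbb{N}^{r}$-graded module over a finitely generated $\mathbb{N}^{r}$-graded commutative $F$-algebra is a rational function in the grading variables. Applying this to $M_g$ over $C$, with grading variables $(t_1,\ldots,t_r)$, yields the desired rationality. The only nontrivial verification is the matching of gradings: that the block-multigrading on $C$ inherited from the $t$-variables really does coincide with the $(d_1,\ldots,d_r)$-multigrading counting occurrences of the $x_{g_i}$'s. This is the main conceptual obstacle; it is settled by a direct inspection of how the generic elements $z_{g_j}$ are built, after which all remaining steps are standard consequences of Noetherianity and Hilbert--Serre.
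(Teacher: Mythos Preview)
Your proposal is correct and follows essentially the same route as the paper: both argue that $C$ is Noetherian (an affine commutative domain), $\mathcal{A}_{C}$ is a finite $C$-module by Theorem~\ref{finite module}, hence the submodule $M$ is finitely generated, and then invoke the standard Hilbert--Serre rationality for finitely generated graded modules over an affine commutative algebra (the paper cites \cite{BR}, Prop.~9.33 via Lemma~\ref{RATIONALITY FOR FINITE MODULES}). Your write-up is more explicit than the paper's in verifying that the block-multigrading on $C$ matches the $(d_1,\ldots,d_r)$-multigrading on the relatively free algebra and in isolating the $g$-component $M_g$; the paper leaves these points implicit.
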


\begin{proof}

Since $\mathcal{A_{C}}$ is a finite module over a Noetherian
domain, the module $M$ is finitely generated as well. The result
now follows from the second part of Lemma \ref{RATIONALITY FOR
FINITE MODULES}.

\end{proof}

We can complete now the proof of Theorem \ref{main full}. Let $f$
be a $G$-graded Kemer polynomial of the basic algebra $A$ and let
$J$ be the $G$-graded $T$-ideal it generates together with $I$.
Note that since $f$ is a non identity of $A$ (and hence a non
identity of $F\langle x_{g_1},...,x_{g_r}\rangle/I$) the $T$-ideal
$J$ strictly contains $I$, and hence, by the maximality of $I$ we
have that the Hilbert series of $F\langle
x_{g_1},...,x_{g_r}\rangle/J$ is rational. The key property which
we need here is that the ideal $J/I$ is closed under the
multiplication of the characteristic values (see \cite{AB}, Prop.
8.2). Hence the ideal $J/I$ of the relatively free algebra is in
fact a $C$-submodule of $\mathcal{A_{C}}$. Hence its Hilbert
series is rational. Applying Lemma \ref{FIRST REDUCTION} the
result follows. This completes the proof of Theorem \ref{main
full}.

\end{section}

\begin{section} {A special case} \label{SPECIAL CASE}

In this section we show by direct computations the rationality of
the Hilbert series of the affine relative free $G$-graded algebra
in case $I$ is the $T$-ideal of $G$-graded identities of the group
algebra $FG$. In addition we obtain a precise estimation of the asymptotic behavior
of the $G$-graded codimension sequence for that case.

Let $\overline{\alpha} = (g_1, g_2,..., g_r)$ be an $r$-tuple in
$G^{(r)}$. As in previous sections we consider the free $G$-graded
algebra $F\langle x_{1,g_1},\ldots,x_{r,g_r} \rangle$, where the
$x_{i,g_{i}}$'s are non-commuting variables which are in one to
one correspondence with the entries of $\overline{\alpha}$.
As above, we may abuse notation by deleting the index $i$ and simply write $x_{g_{i}}$.

Next we consider the $T$-ideal of $G$-graded identities
$\id_{G}(FG)$ of the group algebra $FG$. Recall from \cite{AHN}
that $\id_{G}(FG)$ is generated as a $T$-ideal by binomial
identities of the form $x_{g_{i_1}}x_{g_{i_2}}\cdots
x_{g_{i_n}}-x_{g_{i_{\sigma(1)}}}x_{g_{i_{\sigma(2)}}}\cdots
x_{g_{i_{\sigma(n)}}}$ where $\sigma$ is a permutation in $S_{n}$
and the products $g_{i_1}g_{i_2}\cdots g_{i_n}$ and
$g_{i_{\sigma(1)}}g_{i_{\sigma(2)}}\cdots g_{i_{\sigma(n)}}$
coincide in $G$. That is two monomials are equivalent if and only
if they have the same variables and they determine elements in the
same $g$-homogeneous component. In particular if the group $G$ is
abelian, then two monomials are equivalent if and only if they
have the same variables.

\begin{remark}
Clearly, by passing to a subgroup of $G$ if necessary, we may
assume the elements of $\overline{\alpha}=(g_1, g_2,..., g_r)$
generate the group $G$.
\end{remark}

Fix a natural number $n$ and consider monomials of degree
$n$ with $n_1$ variables $x_{g_1}$, $n_2$ variables $x_{g_2}$,
\ldots, $n_r$ variables $x_{g_r}$ where $n_1 + n_2 + \cdots + n_r
= n$. Now consider permutations of any monomial of that form. Clearly, any
permutation determines the same element in the abelianization of
$G$. In other words the elements in $G$ determined by two
monomials which have the same variables lie in the same coset of
the commutator $G^{'}$. In the next lemma we show that if the
monomial is ``rich enough" we may obtain all elements of a
$G^{'}$-coset. In order to state the lemma we need the following notation.

We say that the word $ \Sigma = g_{i_1}g_{i_2} \cdots g_{i_n}$ is
a presentation of $g \in G$ (in terms of the entries of
$\overline{\alpha} = (g_1, g_2,..., g_r)$) if $g=g_{i_1}g_{i_2}
\cdots g_{i_n}$ in $G$. For any word $\Sigma$ we may consider the
corresponding \textit{monomial} (in the free algebra)
$X_{\Sigma}=x_{g_{i_1}}x_{g_{i_2}}\cdots x_{g_{i_n}}$. We say that
the monomial $X_{\Sigma}$ represents $g$ in $G$.

\begin{lemma} \label{RICH WORDS}

For every $z \in G$ there exists an integer $n$ and a word $g_{i_1}g_{i_2}\cdots
g_{i_n}$ in $G$ such that the set of words in

$$
\Omega_{g_{i_1}g_{i_2}\cdots g_{i_n}} =\{g_{\sigma}=
g_{i_{\sigma(1)}}g_{i_{\sigma(2)}}\cdots g_{i_{\sigma(n)}}, \sigma
\in Sym(n) \}
$$
represent all elements of the form $zg$ where $g \in G^{'}$ (i.e. the full coset of $G^{'}$ in $G$ represented by $z$).

\end{lemma}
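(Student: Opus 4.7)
The goal of Lemma~\ref{RICH WORDS} is, for a given $z\in G$, to construct a single word in the letters $g_1,\dots,g_r$ whose letter-permutations realize exactly the coset $zG'$. The plan is to build such a word explicitly, packing into it enough ``slack'' to realize each element of $G'$ independently.

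First I would record two elementary facts. Since $\overline{\alpha}$ generates $G$, fix once and for all some presentation $W_z$ of $z$ as a word in the $g_i$'s. Second, the set of products obtained by letter-permutations of any word $W$ is automatically contained in $p(W)\,G'$: swapping two adjacent letters $a,b$ in $uabv$ changes the product by the conjugate $u[b,a]u^{-1}\in G'$, and any permutation is a composition of adjacent swaps. So only the reverse inclusion requires work.

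The key device is a ``swap trick''. If $a,b\in G$ have orders dividing $M:=|G|$, then the two words
\[
a^M b^M \qquad \text{and}\qquad a\,b\,a^{M-1}b^{M-1}
\]
have products $e$ and $[a,b]$ respectively in $G$, yet are letter-by-letter permutations of each other once each of $a,b$ is written as a word in the $g_i$'s (both strings contain exactly $M$ copies of every $g_i$-letter occurring in $a$, and $M$ copies of every $g_i$-letter occurring in $b$). This lets me ``inject'' any commutator $[a,b]$ into a word without altering its multiset of underlying $g_i$-letters. Applying this trick block-by-block: for each $c\in G'$, write $c=\prod_{j=1}^{m(c)}[a_{c,j},b_{c,j}]$ (possible since commutators generate $G'$), present each $a_{c,j},b_{c,j}$ as a word in the $g_i$'s, and set
\[
W_c \;:=\; \prod_{j=1}^{m(c)}\bigl(a_{c,j}^{\,M}\, b_{c,j}^{\,M}\bigr).
\]
Then $W_c$ has product $e$, and has a letter-permutation with product $c$.

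Finally, take
\[
W \;:=\; W_z\cdot\prod_{c\in G'} W_c,
\]
a finite concatenation since $G'$ is finite. Its product is $z$, and for any chosen $c\in G'$, permuting only the letters inside the $W_c$-block via the swap trick (and leaving the letters of $W_z$ and the other $W_{c'}$ fixed) yields product $zc$. Together with the first observation, the set of achievable products is exactly the coset $zG'$. The only delicate point, and the place where one must be careful, is the letter-accounting in the swap trick: the words $a^Mb^M$ and $a\,b\,a^{M-1}b^{M-1}$ must be interpreted as strings of letters from $\{g_1,\dots,g_r\}$ after substituting the chosen $g_i$-presentations of $a$ and $b$, not as abstract elements of $G$, and it is the multiset of those underlying letters that must agree. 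Once this bookkeeping is confirmed, the lemma follows.
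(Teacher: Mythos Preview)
Your proof is correct and follows essentially the same strategy as the paper: build, for each commutator, a block of letters that can be permuted between a product-$e$ configuration and a product-$[a,b]$ configuration, then concatenate enough such blocks (together with a word for $z$) so that every element of $zG'$ is reachable. The only real difference is in how inverses are circumvented: the paper writes each commutator $[g,h]=ghg^{-1}h^{-1}$ directly and then replaces each occurrence of $g_i^{-1}$ by $g_i^{\ord(g_i)-1}$, observing that the resulting letter-multiset can be permuted to the identity; you instead start from the trivial side with $a^{M}b^{M}$ (where $M=|G|$) and permute to $ab\,a^{M-1}b^{M-1}$ to obtain $[a,b]$. Your version is a bit more explicit---you spell out the upper bound $p(W)\in zG'$ via adjacent transpositions and enumerate one block $W_c$ per element of $G'$---but the underlying mechanism is the same.
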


\begin{proof}

Clearly it is sufficient to find a word whose permutations yield
all elements of $G^{'}$. It is well known that the commutator
subgroup $G^{'}$ of $G$ is generated by commutators
$[g,h]=ghg^{-1}h^{-1}$ where $g,h \in G$. For any commutator
$[g,h]$ we write the elements $g$ and $h$ as words in the entries
of $\overline{\alpha}$ and then we write $g^{-1}$ and $h^{-1}$ by
inverting the corresponding words. We denote by $\Sigma_{[g,h]}$
the corresponding word in the entries of $\overline{\alpha}$ and
their inverses (which is equal to $[g,h]$ in $G$). Clearly, the
total degree (in $\Sigma_{[g,h]}$) of each entry $g_{i}$ is zero
and hence permuting the elements of $\Sigma_{[g,h]}$ we obtain the
identity element $e$. It follows easily that taking products of
such words we obtain a word $\Sigma = \Sigma_{z}$ whose different
permutations yield all of $G^{'}$. This would complete the proof
of the lemma if we assume that whenever $g$ is an entry in
$\overline{\alpha}$ then also $g^{-1}$ is. But clearly, if this is
not the case, we can replace $g^{-1}$ by $g^{ord(g)-1}$ and the
result follows.

\end{proof}

Consider the ($r$-dimensional) lattice $\Gamma_{r}=
(\mathbb{Z}_{+})^{(r)}$ of non negative integer points, where $r$
is the cardinality of $\overline{\alpha}$. We refer to
$\Gamma_{r}$ as the $r$-dimensional non-negative Euclidean
lattice. Similarly we will consider $\Gamma_{k}$, $k \leq r$, the
$k$-dimensional non-negative Euclidean lattices and their
translations, $\overrightarrow{x} + \Gamma_{k}$ where
$\overrightarrow{x} \in (\mathbb{Z}_{+})^{(k)}$. We view the lattice
$\Gamma_{r}$, as a partial ordered set where $A = (n_1,\ldots,n_r)
\prec B=(m_1,\ldots,m_r)$ if and only if $n_i \leq m_i$ for $1
\leq i \leq r$. Clearly, this partial order inherits a partial
order on $\Gamma_{k}$, $k \leq r$, and their translations. To any
point $A = (n_1,\ldots,n_r)$, $n_i \geq 0$ in $\Gamma_{r}$ we
attach all monomials $X_{\Sigma}$ with number of variables as
prescribed by the point $A$, that is, the variable $x_{1,g_1}$
appears $n_1$ times, $x_{2,g_2}$ appears $n_2$ and so on. Clearly
any word $\Sigma$ determines a unique lattice point $A$ in which
case we write $A=A_{\Sigma}$ or $\Sigma \in A$. Clearly the
elements in $G$ represented by all monomials that correspond to a
point $A \in \Gamma_{r}$ lie in the same coset of $G^{'}$ and
hence denoting by $N_{A}= \{g \in G :$  $g$ is represented by
monomials in $A$\}, we have that $ 1 \leq \ord(N_{A}) \leq
\ord(G^{'})$.

\begin{lemma} \label{monotonicity}

The function $\ord(N(A)): \Gamma_{r} \rightarrow
\{1,\ldots,\ord(G^{'})\}$ is monotonic (increasing) with respect
to partial ordering on $\Gamma_{r}$.

In particular if $A_{\Sigma} \in \Gamma_{r}$, where $\Sigma$ is a word in the entries of $\overline{\alpha}$
whose different permutations represent all elements of $G^{'}$ (as
constructed in Lemma \ref{RICH WORDS})), then for any word $\Pi$ such that $A_{\Pi} \succeq A_{\Sigma}$ we have
$\ord(N(A_{\Pi})) = \ord(G^{'})$.

\end{lemma}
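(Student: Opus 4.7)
The plan is to prove monotonicity by exhibiting, for each comparable pair $A \prec B$ in $\Gamma_{r}$, an explicit injection $N(A) \hookrightarrow N(B)$. Writing $B = A + C$ with $C \in \Gamma_{r}$, I would fix once and for all an arbitrary word $\Pi_{0}$ of shape $C$ (which exists: just list the required variables in any order) and let $c \in G$ be the element it represents. For each $a \in N(A)$, choose a word $\Sigma$ of shape $A$ representing $a$; then the concatenation $\Sigma\Pi_{0}$ is a word of shape $A+C=B$ representing $ac$, so $ac \in N(B)$. This yields a map $a \mapsto ac$ from $N(A)$ into $N(B)$, which is injective because right multiplication by $c$ is a bijection of $G$. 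Hence $\ord(N(A)) \leq \ord(N(B))$, as claimed.

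For the second assertion I would combine monotonicity with the uniform bound $\ord(N(A)) \leq \ord(G')$, valid for every $A \in \Gamma_{r}$ since, as observed in the paragraph preceding the lemma, all monomials of a fixed shape represent elements in a single coset of $G'$. By the construction in Lemma \ref{RICH WORDS}, the permutations of $\Sigma$ already exhaust $G'$, so $\ord(N(A_{\Sigma})) = \ord(G')$. For any $A_{\Pi} \succeq A_{\Sigma}$ the chain
\[ \ord(G') = \ord(N(A_{\Sigma})) \leq \ord(N(A_{\Pi})) \leq \ord(G') \]
then forces equality throughout, giving $\ord(N(A_{\Pi})) = \ord(G')$.

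There is essentially no genuine obstacle here: the whole argument rests on the two elementary observations that (i) the shape of a concatenation of two monomials is the sum of their shapes, and (ii) the element of $G$ it represents is the product of the two represented elements. The only point that deserves to be spelled out is the uniform upper bound $\ord(N(A)) \leq \ord(G')$, which is immediate from the description of $\id_{G}(FG)$ recalled at the start of this section, together with the fact that any two words of a given shape differ by a permutation.
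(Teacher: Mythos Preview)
Your argument is correct and is exactly the natural way to unpack what the paper leaves implicit: the paper's own proof consists solely of the phrase ``This is clear.'' Your concatenation map $a\mapsto ac$ and the squeeze $\ord(G')=\ord(N(A_{\Sigma}))\leq\ord(N(A_{\Pi}))\leq\ord(G')$ are precisely the details one would supply to justify that claim.
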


\begin{proof} This is clear.
\end{proof}

We can now complete the proof that the Hilbert series of the
corresponding relatively free algebra is rational.

Let $A = (n_1,\ldots,n_r) \in \Gamma_{r}$, where $n= n_1 + \cdots + n_r$, and
$P_{A}$ be the space spanned by all monomials $X_{\Pi}$ where
$\Pi\in A$. Since the $T$-ideal of identities is spanned by
strongly homogeneous polynomials we have that the subspace of the
relatively free algebra spanned by monomials of degree $n$ is
decomposed into the direct sum of spaces which are spanned by
monomials in lattice points $A$ of degree $n$.

We claim that for any integer $\lambda$, $1\leq \lambda\leq \ord(G^{'})$, the set of points $A \in \Gamma_{r}$ such that

$$
\dim(P_{A}/P_{A} \cap (\id_{G}(FG)))=\lambda
$$
is a \textit{finite} union of \textit{disjoint} sets which are
translations of lattices $\Gamma_{k}$, $0 \leq k \leq r$. We
present here a proof which was shown to us by Uri Bader. Consider
the one point compactification $\widehat{\mathbb{Z}_{+}}$ of
$\mathbb{Z}_{+}$. It is convenient to view the space
$\widehat{\mathbb{Z}_{+}}$ as homeomorphic to the set of points
$I_{\mathbb{N}}=\{1/n: n \in \mathbb{N}\} \cup \{0\}$ with the
induced topology of the Lesbegue measure on the interval $[0,1]$.
The closed sets are either the finite sets or those that contain
$0$. Consequently the sets which are closed and open are either
the finite sets without $0$ or sets that contain a set of the form
$\{1/n: n \geq d\} \cup \{0\}$. Consider the $r$-fold cartesian
product

$$
I^{r}_{\mathbb{N}}=I_{\mathbb{N}} \times I_{\mathbb{N}} \times \cdots \times I_{\mathbb{N}}
$$

Clearly $I^{r}_{\mathbb{N}}$ is compact. Furthermore the function
$\ord(N(A))$ with values in the finite set $T=\{1,\ldots,
\ord(G^{'})\}$ (viewed as a function on $I^{r}_{\mathbb{N}}$) is
monotonic decreasing and hence continuous. It follows that the
inverse image of any point in $T$ is an open and closed subset of
$I^{r}_{\mathbb{N}}$ and the result now follows easily.

Returning to $\Gamma_{r}= (\mathbb{Z}_{+})^{(r)}$ we see
that the rationality of the series will follow if we know the
rationality of the Hilbert series which corresponds to the
enumeration of lattice points of degree $n$ in  $\Gamma_{k}$, $k \leq r$.
Thus we need to check the rationality of the following
power series

$$
\sum_{n} ((n+k-1)!/(n!)(k-1)!) t^{n}
$$
and this is clear. This gives a direct proof of Theorem \ref{main basic theorem} in case $I=\id_{G}(FG)$.

\bigskip

We close the article with an estimate of the sequence of
codimensions which corresponds to the $T$-ideal $I=\id_{G}(FG)$.
For the calculation we consider the $F$-space $P_{n}$ spanned by
all $G$-graded multilinear monomials of degree $n$ on the
variables $\{x_{g,i}\}_{{g \in G},{i=1,\ldots, n}}$ which are
permutations of monomials of the form $x_{1,g_1}x_{2,g_2}\ldots
x_{n,g_n}$, where $(g_1,\ldots,g_n) \in G^{(n)}$. Clearly,
$\dim(P_{n})= ord (G)^{n} \times n!$. Let

$$
c^{n}_{G}(FG)=P_{n}/(P_{n}\cap I).
$$

The integer $c^{n}_{G}(FG)$ is the $n$-codimension which corresponds to the $G$-graded $T$-ideal $I$.

\begin{corollary} \label{codimension for FG}

Let $A=FG$ be the group algebra over a field $F$ and let
$\id_{G}(FG)$ be the $G$-graded $T$-ideal of identities. Let
$c^{n}_{G}(FG)$ be the $n$-th coefficient of the codimension
sequence of $\id_{G}(FG)$. Then for any integer $n$ we have

\begin{enumerate}

\item
$$
ord (G)^{n} \leq c^{n}_{G}(FG) \leq \ord (G^{'})\ord (G)^{n}.
$$

\item

$$
\lim_{n\rightarrow \infty} (c^{n}_{G}(FG)/\ord (G^{'})\ord (G)^{n}))=1
$$

\end{enumerate}
In particular $\exp_{G}(FG) = \lim_{n\rightarrow \infty} \sqrt[n]{c^{n}_{G}(FG)}=\ord(G)$ $($see \cite{AGM}$)$.

\end{corollary}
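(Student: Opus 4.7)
The plan is to translate $c^{n}_{G}(FG)$ into a purely combinatorial sum over $G^{(n)}$, read off part (1) immediately, and then establish the sharp asymptotic in part (2) by showing that the rich-word construction of Lemmas \ref{RICH WORDS} and \ref{monotonicity} applies to almost every tuple.

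First I would fix $n$ and a tuple $\bar{g} = (g_1,\ldots,g_n) \in G^{(n)}$. The $n!$ multilinear monomials $x_{\sigma(1),g_{\sigma(1)}}\cdots x_{\sigma(n),g_{\sigma(n)}}$, $\sigma \in \Sym(n)$, span a subspace of $P_n$; using the description of $\id_{G}(FG)$ recalled at the start of Section \ref{SPECIAL CASE}, two such monomials are congruent modulo $I$ if and only if $g_{\sigma(1)}\cdots g_{\sigma(n)} = g_{\tau(1)}\cdots g_{\tau(n)}$ in $G$. Thus the contribution of $\bar{g}$ to $c^{n}_{G}(FG)$ is exactly the cardinality $|N_{\bar{g}}|$ of the set of elements of $G$ represented by permutations of $\bar{g}$, and
$$
c^{n}_{G}(FG)=\sum_{\bar{g}\in G^{(n)}} |N_{\bar{g}}|.
$$
Since all such products lie in a single coset of the commutator subgroup $G'$, one has $1\leq |N_{\bar{g}}|\leq \ord(G')$ for every $\bar{g}$, and summing over the $\ord(G)^n$ tuples in $G^{(n)}$ gives the inequalities of part (1).

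For part (2), I would apply Lemma \ref{RICH WORDS} once for each coset $zG'$ (equivalently, for a set of coset representatives), producing a rich word $\Sigma_z$ whose permutations exhaust $zG'$, and let $\Sigma^{*}$ be the concatenation of the $\Sigma_z$'s, so that $A_{\Sigma^{*}}$ dominates every $A_{\Sigma_z}$. By Lemma \ref{monotonicity}, any $\bar{g}\in G^{(n)}$ whose multiplicity vector dominates that of $\Sigma^{*}$ satisfies $|N_{\bar{g}}| = \ord(G')$. The number of $\bar{g}\in G^{(n)}$ that \emph{fail} to dominate $\Sigma^{*}$ --- that is, some $g\in G$ appears fewer than the required number of times in $\bar{g}$ --- is, by a union bound over $g\in G$ together with an elementary multinomial/Chernoff estimate, of the form $O\bigl(n^{c}(1-1/\ord(G))^n\bigr)$ for some constant $c$, hence $o(\ord(G)^n)$. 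Consequently
$$
c^{n}_{G}(FG) \geq \ord(G')\bigl(1-o(1)\bigr)\ord(G)^{n},
$$
which combined with the upper bound in part (1) yields the limit in part (2). Taking $n$-th roots then immediately gives $\exp_{G}(FG)=\ord(G)$.

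The only slightly technical ingredient is the counting estimate that almost every tuple in $G^{(n)}$ is $\Sigma^{*}$-rich, but this is a routine consequence of the law of large numbers once the rich words $\Sigma_z$ are fixed. The deeper inputs --- namely the explicit generators of $\id_{G}(FG)$ as coset-preserving binomials and the lattice monotonicity coming from commutators --- are already recorded in Section \ref{SPECIAL CASE}, so no additional machinery is required.
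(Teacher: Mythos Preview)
Your argument is correct and follows essentially the same route as the paper's proof: express $c^{n}_{G}(FG)$ as a sum of $|N_{\bar g}|$ over tuples $\bar g\in G^{(n)}$, read off part~(1) from the bounds $1\le |N_{\bar g}|\le \ord(G')$, and deduce part~(2) by showing that all but an $o(\ord(G)^{n})$ fraction of tuples dominate a fixed rich word and hence attain the maximum $\ord(G')$. One minor simplification: the concatenation $\Sigma^{*}$ over all cosets is unnecessary, since Lemma~\ref{monotonicity} already asserts that domination of a \emph{single} word $\Sigma$ whose permutations exhaust $G'$ forces $|N_{\bar g}|=\ord(G')$ (the coset is determined by $\bar g$ itself); the paper uses just this one $\Sigma$.
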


\begin{proof}

Let $\textbf{x}=x_{1,g_{1}}x_{2,g_{2}} \ldots x_{n,g_{n}}$ be a
$G$-graded monomial in $P_{n}$. As noted above, the set of all
$n!$ permutations of $\textbf{x}$ decomposes into at most $G^{'}$
equivalence classes where two monomials are equivalent if and only
if the difference is a binomial identity of $FG$. It then follows
easily that $ord (G)^{n} \leq c^{n}_{G}(FG) \leq \ord (G^{'})\ord
(G)^{n}.$

For the second part, recall from the second part of Lemma \ref{monotonicity} that if a
monomial contains $G$-graded variables with degrees in $G$ as in
the word $\Sigma$ (including multiplicities), then its
permutations yield precisely $G^{'}$ non equivalent classes. So
the second part of the corollary will follow easily if we show
that

$$
\lim_{n\rightarrow \infty} \frac{d_n}{ord (G)^{n} \times n!}=0
$$
where $d_{n}$ denotes the number of monomials in $P_{n}$ which do
not contain the set of elements of $\Sigma$ (with repetitions). But this
of course follows from an easy calculation which is omitted.

\end{proof}

\begin{remark} \label{twisting}

One may replace the group algebra $FG$ above by any twisted group
algebra $F^{\alpha}G$, where $\alpha$ is a $2$-cocycle of $G$ with
values in $F^{*}$. As above, also here, two monomials are
equivalent if and only if they have the same variables and they
determine elements in the same $g$-homogeneous component. The only
difference (comparing to the case where $\alpha \equiv 1$) is that
here, for two such monomials, the polynomial identity they
determine has the form

$$x_{g_{i_1}}x_{g_{i_2}}\cdots
x_{g_{i_n}}- \gamma
x_{g_{i_{\sigma(1)}}}x_{g_{i_{\sigma(2)}}}\cdots
x_{g_{i_{\sigma(n)}}}
$$
where $\gamma$ is a non zero element of $F$ which is determined by
the $2$-cocycle $\alpha$ and the words  $g_{i_1}g_{i_2}\cdots
g_{i_n}$ and $g_{i_{\sigma(1)}}g_{i_{\sigma(1)}}\cdots
g_{i_{\sigma(1)}}$ (see \cite{AHN}). One sees easily that the
twisting by the $2$-cocycle $\alpha$ has no effect neither on the
Hilbert series nor on the sequence of codimnesions. Details are
left to the reader.

\end{remark}

\end{section}

\end{document}